\definecolor{labelkey}{rgb}{0.6,0,1}
\newcommand\norm[1]{\left\lVert#1\right\rVert}
\theoremstyle{plain}
\newtheorem{theorem}{Theorem}[section]
\newtheorem{lemma}[theorem]{Lemma}
\newtheorem{assumptions}[theorem]{Assumptions}
\theoremstyle{definition}
\newtheorem{definition}[theorem]{Definition}
\newtheorem{test}[theorem]{Test}
\def\bhyp#1{\begin{equation}\label{#1}\begin{array}{c}}
\def\ehyp{\end{array}\end{equation}}
\newcounter{cst}
\theoremstyle{remark}
\newtheorem{remark}[theorem]{Remark}
\numberwithin{equation}{section}
\numberwithin{figure}{section}
\newcommand{\RR}{{\mathbb R}}
\newcommand{\NN}{{\mathbb N}}
\newcommand{\cS}{{\mathbb S}}
\def\O{\Omega}
\def\dsp{\displaystyle}
\def\disc{{\mathcal D}}
\def\dr{\partial}
\newcommand{\cK}{{\mathcal K}}
\def\bvarphi{\boldsymbol{\phi}}
\newcommand{\x}{\pmb{x}}
\def\cA{\mathcal{A}}
\def\bphi{\boldsymbol{\varphi}}
\def\A{\mathbb{A}}
\def\B{\mathbb{B}}
\def\cM{{\mathbb M}}
\def\cT{\mathbb T}
\def\cR{\mathcal R}
\newif\ifcorr\corrtrue
\definecolor{violet}{rgb}{0.580,0.,0.827}
\def\bpsi{{\boldsymbol \psi}}
\def\bw{{\boldsymbol w}}
\newcommand{\ud}{\, \mathrm{d}} 
\def\div{\mathop{\rm div}}
\title[Error Estimates of RDS with Constraints]{Error Estimates of Generic Discretisation of Reaction-Diffusion System with Constraints}
\author{Yahya Alnashri}
\address[Yahya Alnashri]{Department of Mathematics, Al-Qunfudah University College, Umm Al-Qura University, Saudi Arabia}
\email{yanashri@uqu.edu.sa}
\subjclass[2010]{35K57,65N12,65M08}
\keywords{System of parabolic variational inequalities, system of reaction diffusion equations, problems with constraints, non-linear problems, biofilm growth models, error estimates, non-conforming methods, finite volume methods, gradient discretisation method}
\date{\today}
\begin{document}

\begin{abstract}
In this paper, we study a parabolic reaction–diffusion system with constraints that model biofilm growth. Within a unified framework encompassing multiple numerical schemes, we derive the first general convergence rates for approximating this model using both conforming and non-conforming discretisation methods. Under standard assumptions on the time discretisation, we establish the existence and uniqueness of the discrete solution. Numerical experiments are conducted using a mixed finite volume scheme that fits within the proposed unified framework. A test case with an analytical solution is designed to confirm our theoretical convergence rates.
\end{abstract}

\maketitle

\section{Introduction}
\label{sec-intro}
The model studied in this work describes biofilm growth at the porescale, explained in \cite{biofilm-apps-2016}. Let $\O$ be a bounded, open, and connected subset of $\RR^d$ (with $d\geq 1$) with a boundary $\dr\O$. We seek a pair $(\bar p,\bar q)$ satisfying the following system of two coupled non-linear partial differential equations (PDEs) subject to constraints: 
\begin{subequations}\label{pvi-obs} 
\begin{align}
(\partial_t \bar p - \div(\A \nabla \bar p)-f(\bar p,\bar q))(\bar p - \chi ) = 0 &\mbox{\quad in $\O_T:=\O\times (0,T)$,} \label{pvi-obs1}\\
\partial_t \bar p - \div(\A \nabla \bar p) \geq f(\bar p,\bar q) &\mbox{\quad in $\O_T$,} \label{pvi-obs2}\\
\bar p \geq \chi &\mbox{\quad in $\O_T$,} \label{pvi-obs3}\\
\partial_t \bar q - \div(\B \nabla \bar q) = g(\bar p, \bar q) &\mbox{\quad in $\O_T$,} \label{pvi-obs4}\\
(\bar p, \bar q) = (0,0) &\mbox{\quad on $(\partial\O \times (0,T))^2$,} \label{pvi-obs5}\\
(\bar p(\x,0), \bar q(\x,0)) = (p_0, q_0) &\mbox{\quad in $(\Omega \times \{0\})^2$} \label{pvi-obs6},
\end{align}
\end{subequations}
where precise assumptions on the data will be presented in the next section. The primary goal of this paper is to establish, for the first time (to our knowledge), general error estimates for approximating the model \eqref{pvi-obs} using both conforming and non-conforming methods.

Mathematical theories addressing the existence, uniqueness, and stability of systems of partial differential equations (PDEs) have been extensively developed in the literature; see, for instance, \cite{kinderlehrer-2000,glowinski-1981} for linear variational inequalities, \cite{E-1} for non-linear ones, \cite{18,19} for non-linear PDEs.

Numerous studies have also investigated the numerical approximation of parabolic problems. The subproblem \eqref{pvi-obs1}--\eqref{pvi-obs3} with $f=0$ (the parabolic obstacle problem) has been analysed using the upwind implicit finite volume scheme in \cite{obstacle-Eymard}, Crouzeix–Raviart finite element method in \cite{Crouzeix-2020}, and mixed finite volume scheme in \cite{obstacle-alnashri}. Reaction-diffusion equations have been discretised using finite difference schemes \cite{Beatrice-23}, finite element methods \cite{Thomas-2015}, and mixed finite volume method \cite{Y-H-2022}. In \cite{yahya-2025}, we introduced error estimates for the non-conforming approximation of such a system. A finite volume scheme for the system  incorporating a non-linear diffusion operator was studied in \cite{REF-2}. A mathematical analysis of a biofilm model, based on the theory of Navier–Stokes variational inequalities, is presented in \cite{Gokieli-2018} while numerical discretisations of the biofilm model can be found in \cite{Rangarajan-2018,Duvnjak-2016}. However, these mentioned works address the systems without the constraints \eqref{pvi-obs2} and \eqref{pvi-obs3}.

Further, the numerical analysis of the model \eqref{pvi-obs} was explored in several works. For instance, \cite{REF-1} proposed the $\mathbb P1$ finite elements method for the model \eqref{pvi-obs} and proved its convergence order in an appropriate norm. In our earlier work \cite{Alnashri-2022}, we developed a general numerical analysis of the problem \eqref{pvi-obs} (without simulations), proving that the scheme converges along a subsequence of discrete solutions to a weak continuous solution. Nonetheless, no convergence rates or numerical experiments were provided.

To the best of our knowledge, the numerical analysis of systems of semilinear partial differential equations with constraints inclusions remains largely unexplored. This work aims to fill that gap by providing the first general error estimates for approximating the model \eqref{pvi-obs}. The main novelty of our work lies in using the gradient discretisation framework, which allows us to establish generic convergence rates applicable to both conforming and non-conforming schemes, rather than focusing on a specific numerical method.

The paper is organised as follows. Section \ref{sec-weak-disc} introduces the variational formulation of the model together with the gradient discretisation framework. Section \ref{sec-results} presents and proves the main results, including the existence and uniqueness of the approximate solution, and error estimates. We develop a technique to handle the non-linearity arising from inequalities and reaction functions. Section \ref{sec-numerical} provides the first numerical simulation of the model \eqref{pvi-obs} using a non-conforming method known as the HMM scheme. We construct a test case with an exact solution to precisely evaluate the derived convergence rates.


\section{Variational formulation and discrete scheme}\label{sec-weak-disc}

\begin{assumptions}\label{assump-1}
We impose the following assumptions on the data:
\begin{itemize}
\item $\O \subset \RR^d\; (d\geq 1)$ is an open, bounded, and connected set with a $C^2$--regular boundary $\dr\O$, and $T>0$,
\item $\A, \B: \O \to \mathbb S_d(\RR)$ ($d \times d$ matrices) are measurable functions, such that, for a.e. $\A$ and $\B$ are symmetric with eigenvalues in $[d_1,d_2]$,
\item the barrier function $\chi \in H^1(\O)\cap C(\overline\O)$ and is a non-positive on $\dr\O$,  
\item the non-linear functions $f$ and $g$ are Lipschitz continuous on $\RR^2$ with Lipschitz constants $M_1$ and $M_2$, respectively, in which $M=\max\{M_1,M_2\}$,
\item the initial solutions $(p_0,q_0)\in (W^{2,\infty}(\O)\cap \cK) \times W^{2,\infty}(\O)$, where $\cK$ is a closed convex set defined by 
\begin{equation}\label{eq-set}
\cK:=\{ \varphi \in H_0^1(\O)\; : \; \varphi \geq \chi(t)\; \mbox{ in } \O \}.
\end{equation}
\end{itemize}
\end{assumptions}

Let us define the following dependent time closed convex set
\begin{equation}\label{eq-time-set}
\mathbb K:=\{\varphi \in L^2(0,T;H_0^1(\O))\;:\; \varphi(\x,t)\in \cK \mbox{ for a.e. }t\in [0,T]\}.
\end{equation}

Under the above assumptions, we can express the model \eqref{pvi-obs} in a weak sense (variational formulation): Find $(\bar p,\bar q) \in (\cK\cap C^0([0,T];L^2(\O))) \times L^2(0,T;H_0^1(\O))$, such that $(\dr_t\bar p,\dr_t\bar q)\in L^2((0,T;H^{-1}(\O)))^2$, and stisfies the following system:
\begin{subequations}\label{pvi-obs-weak}
\begin{equation}\label{pvi-obs-w1}
\begin{aligned}
&\dsp\int_0^T\langle \partial_t\bar p(\x,t),\varphi(\x,t) \rangle \ud t
+\dsp\int_0^T\int_\O \A\nabla\bar p \cdot \nabla(\bar p-\varphi)(\x,t)\ud \x \ud t\\
&\leq\dsp\int_0^T \int_\O f(\bar p,\bar q)(\bar p(\x,t)-\varphi(\x,t))\ud \x \ud t,\quad \forall \varphi \in \mathbb K,\mbox{ and }
\end{aligned}
\end{equation}
\begin{equation}\label{pvi-obs-w2}
\left.
\begin{aligned}
&\dsp\int_0^T \langle \partial_t\bar q(\x,t),\psi(\x,t) \rangle \ud t
+\dsp\int_0^T\int_\O \B(\x)\nabla \bar q(\x,t) \cdot \nabla \psi(\x,t)\ud \x \ud t\\
&\quad= \dsp\int_0^T\dsp\int_\O g(\bar p(\x,t),\bar q(\x,t))\psi(\x,t) \ud \x \ud t,\quad \forall \psi \in L^2(0,T;H_0^1(\O)),
\end{aligned}
\right.
\end{equation}
\end{subequations}
where $\langle \cdot,\cdot \rangle$ is the duality product between the spaces $H^{-1}(\O)$ and $H^1(\O)$. The existence of a weak solution to this problem will be a result of the unified analysis provided here, see Remark \ref{remark-1}.
 
\begin{definition}[{\bf Discrete elements}]\label{def-gd-pvi-obs} A gradient discretisation $\disc$ is defined by $\disc=(X_{\disc,0},\Pi_\disc,\nabla_\disc, \chi_\disc, P_\disc, I_\disc, J_\disc)$, where:
\begin{enumerate}
\item The discrete set $X_{\disc,0}$ is a finite-dimensional vector space on $\RR$, taking into account the homogenous Dirichlet boundary conditions.
\item The linear operator $\Pi_\disc : X_{\disc,0} \to L^2(\O)$ is the reconstruction of the approximate function.
\item The linear operator $\nabla_\disc : X_{\disc,0} \to L^2(\O)^d$ is the reconstruction of the gradient of the function, and must be chosen so that $\| \nabla_\disc\cdot \|_{L^2(\O)^d}$ is a norm on $X_{\disc,0}$.
\item $\chi_\disc\in L^2(\O)$ is an approximation of the barrier $\chi$.
\item $P_\disc:H_0^1(\O) \cap H^2(\O) \to X_{\disc,0}$ is a linear continuous interpolant and must be constructed so that $P_\disc(\cK\cap H^2(\O))\subset \cK_\disc$, where
\begin{equation}\label{eq-KD}
\cK_\disc:=\{ \varphi\in X_{\disc,0} :\; \Pi_\disc \varphi\geq \chi_\disc \mbox{ in } \O \}.
\end{equation}
\item $I_\disc: W^{2,\infty}(\O)\cap\cK \to \cK_\disc$ is a linear and continuous interpolation operator for the initial solution $p_0$,
\item $J_\disc: W^{2,\infty}(\O) \to X_{\disc,0}$ is a linear and continuous interpolation operator for the solution $q_0$.
\end{enumerate}
\end{definition}

\begin{remark}
In our discretisation, we introduce an approximate barrier $\chi_\disc$ in the definition of the discrete set $\cK_\disc$ to address the challenge of constructing an interpolant that satisfies the barrier throughout the entire space $\O$. It is worth noting that most numerical schemes define interpolants solely from the solution values $\bar p$ at mesh vertices. This approach does not necessarily guarantee that the constraint  \eqref{pvi-obs3} is satisfied at all spatial points, except in the special case where the barrier is constant.
\end{remark}

To develop an effective approximate scheme, only three core properties are required: coercivity, consistency, and limit-conformity. These properties are governed by a set of parameters and functions defined within our framework,
\begin{equation}\label{corc-eq}
C_\disc = \dsp\max_{\varphi \in X_{\disc,0}\setminus\{0\}}\frac{\|\Pi_\disc \varphi\|_{L^2(\O)}}{\|\nabla_\disc \varphi\|_{L^2(\O)^d}},
\end{equation}

\begin{equation}\label{consist-1}
\begin{aligned}
&S_\disc:H_0^1(\O) \times X_{\disc,0} \to [0,\infty), \mbox{ which is defined by}\\
&\forall (w,\varphi) \in H_0^1(\O) \times X_{\disc,0}, \; S_\disc(w,\varphi)=\| \Pi_\disc \varphi - w \|_{L^2(\O)} 
+ \| \nabla_\disc \varphi - \nabla w \|_{L^2(\O)^{d}},
\end{aligned}
\end{equation}
\begin{equation}\label{long-rm}
\begin{aligned}
&W_{\mathcal{D}} : H_{\rm div}(\O):=\{\bpsi \in L^2(\O)^d\;:\; {\rm div}\bpsi \in L^2(\O)\}\to [0, +\infty), \mbox{ which is defined by}\\
&W_{\mathcal{D}}(\bpsi)
 = \sup_{\varphi\in X_{\disc,0}\setminus \{0\}}\frac{\Big|\dsp\int_{\O}(\nabla_\disc \varphi\cdot \bpsi + \Pi_\disc \varphi \div (\bpsi)) \ud \x \Big|}{\| \nabla_\disc \varphi\|_{L^2(\O)^d} }.
\end{aligned}
\end{equation}

\begin{remark}
Unlike our analysis in \cite{Alnashri-2022}, which employs two distinct parameters to assess the scheme's consistency, we introduce here a single parameter $S_\disc$ to control the interpolation error across elements $\bar q,\dr_t \bar q, \dr_t \bar p \in H_0^1(\O)$ and $\bar p \in \cK$. Also, we note that Definition \ref{def-gd-pvi-obs} involves a linear spatial interpolator $P_\disc:\cK \cap H^2(\O)\to \cK_\disc$ while \cite[Lemma 1]{droniou-2017-error} proves that there exists a linear one $\widetilde P_\disc:H_0^1(\O)\to X_{\disc,0}$ defined by
\begin{equation}\label{PD}
\widetilde P_\disc(w):=\arg\min_{w\in X_{\disc,0}} \|\Pi_\disc w - \varphi\|_{L^2(\O)}
+\|\nabla_\disc w - \nabla\varphi\|_{L^2(\O)}.
\end{equation}
Thanks to the definition of $S_\disc$, we obtain
\begin{equation}\label{new-eq-proof-SD-u}
\|\Pi_\disc P_\disc\varphi - \varphi\|_{L^2(\O)}
+\|\nabla_\disc P_\disc\varphi - \nabla\varphi\|_{L^2(\O)^d}
\leq S_\disc(\varphi,P_\disc\varphi),\quad \forall \varphi \in H^2(\O)\cap \cK.
\end{equation}
and
\begin{equation}\label{new-eq-proof-100}
\|\Pi_\disc \widetilde P_\disc\varphi - \varphi\|_{L^2(\O)}
+\|\nabla_\disc \widetilde P_\disc\varphi - \nabla\varphi\|_{L^2(\O)^d}
\leq S_\disc(\varphi,\widetilde P_\disc\varphi),\quad \forall \varphi \in H_0^1(\O).
\end{equation}

It should also be emphasised that the interpolant $P_\disc$ is required only for two elements, $\bar p$ and $\dr_t\bar p$, and identifying a suitable interpolant for a specific numerical scheme is generally straightforward. We refer the reader to \cite[Remark 3.2 and Section 4]{obstacle-alnashri} for details.
\end{remark}

In what follows, let $N\in \NN^\star$ and $0\leq n \leq N-1$. We define a discretisation of the time interval $[0,T]$ by $t^{(0)}=0<...<t^{(N)}=T$, with the time step $\delta t^{(n+\frac{1}{2})}:=t^{(n+1)}-t^{(n)}$. We set $\delta t:=\max_{n} \delta t^{(n+\frac{1}{2})}$. The discrete derivative $\delta_\disc \varphi \in L^\infty(0,T;L^2(\O))$ of $\varphi\in X_{\disc,\Gamma_2}^{N+1}$ is defined by  
\begin{equation*}
\delta_\disc \varphi(t)=\delta_\disc^{(n+\frac{1}{2})}\varphi:=\frac{\Pi_\disc \varphi^{(n+1)}-\Pi_\disc \varphi^{(n)}}{\delta t^{(n+\frac{1}{2})}}, \mbox{ $\forall n=0,...,N-1$ and $t\in (t^{(n)},t^{(n+1)}]$}.
\end{equation*}

Using the gradient discretisation in Definition \ref{def-gd-pvi-obs}, we can introduce a generic implicit Euler scheme for the problem \eqref{pvi-obs-weak}, it is named a gradient scheme. Seek $(p^{(n)},q^{(n)})_{n=0,...,N} ) \subset \cK_\disc \times X_{\disc,0}$, such that $(p^{(0)},q^{(0)})=( J_\disc p_0, \widetilde J_\disc q_0 )\in \cK_\disc \times X_{\disc,0}$, for all $n=0,...,N-1$, the following inequality and equality hold:
\begin{subequations}\label{gs-pvi}
\begin{equation}\label{gs-pvi-obs1}
\begin{array}{ll}
\dsp\int_\O \delta_\disc^{(n+\frac{1}{2})}p(\x)\, \Pi_\disc(p^{(n+1)}-\varphi)(\x) \ud \x+\dsp\int_\O \A(\x)\nabla_\disc p^{(n+1)}(\x)\cdot\nabla_\disc(p^{(n+1)}-\varphi)(\x) \ud \x\\
\leq \dsp\int_\O f(\Pi_\disc p^{(n+1)}, \Pi_\disc q^{(n+1)})\Pi_\disc(p^{(n+1)}-\varphi)(\x) \ud \x,\quad \forall \varphi \in \cK_\disc, \mbox{ and } 
\end{array}
\end{equation}
\begin{equation}\label{gs-pvi-obs2}
\begin{array}{ll}
\dsp\int_\O \delta_\disc^{(n+\frac{1}{2})}v(\x)\, \Pi_\disc \psi(\x) \ud \x
+\dsp\int_\O \B(\x)\nabla_\disc q^{(n+1)}(\x)\cdot\nabla_\disc \psi(\x) \ud \x\\
= \dsp\int_\O g(\Pi_\disc p^{(n+1)}, \Pi_\disc q^{(n+1)})\Pi_\disc\psi(\x) \ud \x,\quad \forall \psi \in X_{\disc,0}.
\end{array}
\end{equation}
\end{subequations}

\section{Main Results}\label{sec-results}
We begin by establishing the existence and uniqueness of the approximate solution. We develop a similar technique as in \cite{BRADJI-2016}(which only applies to a single equality) to deal with the system and the inequalities.

\begin{lemma}\label{lemma-1}
Assume that Assumptions \ref{assump-1} hold and let $\disc$ be a gradient discretisation. If $\delta t^{(n+\frac{1}{2})}< \frac{1}{2M}$, then there exists a unique solution to the scheme \eqref{gs-pvi}.
\end{lemma}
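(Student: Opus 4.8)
The plan is to establish existence and uniqueness by induction on $n$. Given $(p^{(n)}, q^{(n)})$, we want to produce a unique $(p^{(n+1)}, q^{(n+1)}) \in \cK_\disc \times X_{\disc,0}$ solving the coupled system \eqref{gs-pvi-obs1}--\eqref{gs-pvi-obs2}. The base case $n=0$ is given by the prescribed initial data. For the inductive step, I would first recast the scheme as a fixed point problem: decouple the two equations by freezing the reaction arguments, then set up a map whose fixed point is the solution.

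\medskip

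First I would treat the $q$-equation. With $p^{(n+1)}$ and $q^{(n+1)}$ temporarily frozen inside $g$, equation \eqref{gs-pvi-obs2} becomes a linear, symmetric, coercive variational equality on the finite-dimensional space $X_{\disc,0}$ — coercivity coming from the eigenvalue bound $d_1$ on $\B$ together with the fact that $\| \nabla_\disc \cdot \|_{L^2(\O)^d}$ is a norm on $X_{\disc,0}$ (item (3) of Definition \ref{def-gd-pvi-obs}), and the factor $\frac{1}{\delta t^{(n+\frac12)}}$ from the discrete time derivative only helps coercivity. By Lax–Milgram (or just invertibility of a positive-definite matrix) there is a unique $q^{(n+1)}$ for each frozen argument. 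Similarly, for the $p$-inequality \eqref{gs-pvi-obs1} with frozen reaction argument, this is a finite-dimensional elliptic variational inequality on the nonempty closed convex set $\cK_\disc$ (nonempty because $P_\disc(\cK \cap H^2(\O)) \subset \cK_\disc$); the associated bilinear form $\varphi \mapsto \frac{1}{\delta t^{(n+\frac12)}}\int_\O \Pi_\disc\varphi\,\Pi_\disc\varphi + \int_\O \A \nabla_\disc\varphi\cdot\nabla_\disc\varphi$ is coercive and continuous, so by the standard Stampacchia theorem there is a unique solution. This defines a solution map $\Phi$ on (a ball in) $X_{\disc,0} \times X_{\disc,0}$ sending the frozen pair to the genuine solution pair.

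\medskip

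Then I would show $\Phi$ is a contraction, which simultaneously gives existence and uniqueness. Take two frozen pairs, subtract the corresponding $p$-inequalities (testing each with the other's solution, as in the classical uniqueness argument for variational inequalities — the obstacle terms drop because both solutions lie in $\cK_\disc$), and likewise subtract the $q$-equalities. Using the Lipschitz bounds on $f$ and $g$ (constants $M_1, M_2 \le M$), the Cauchy–Schwarz inequality, and the coercivity constant $C_\disc$ from \eqref{corc-eq} to pass between $\|\Pi_\disc\cdot\|$ and $\|\nabla_\disc\cdot\|$ norms, one obtains an estimate of the form $\|\Pi_\disc(\text{difference of outputs})\|^2 \lesssim M\,\delta t^{(n+\frac12)} \,\|\Pi_\disc(\text{difference of inputs})\| \cdot \|\Pi_\disc(\text{difference of outputs})\|$, hence a Lipschitz factor proportional to $M\,\delta t^{(n+\frac12)}$; the hypothesis $\delta t^{(n+\frac12)} < \frac{1}{2M}$ is exactly what makes this factor strictly less than $1$. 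Banach's fixed point theorem then yields a unique fixed point, completing the induction.

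\medskip

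I expect the main obstacle to be the contraction estimate for the coupled inequality/equality pair: one must be careful that when differencing the two variational inequalities for $p$, only admissible test functions are used so the constraint contributions have the right sign and cancel, and one must track how the reaction terms $f(\Pi_\disc p^{(n+1)}, \Pi_\disc q^{(n+1)})$ and $g(\Pi_\disc p^{(n+1)}, \Pi_\disc q^{(n+1)})$ — which depend on \emph{both} components — feed the coupling, so that the contraction is genuinely on the product space with a single small constant. Absorbing the $\|\Pi_\disc\cdot\|$ on the right into the left using Young's inequality, and keeping the resulting constant sharp enough that $\delta t < \frac{1}{2M}$ suffices (rather than some smaller threshold), is the delicate bookkeeping; everything else is routine finite-dimensional functional analysis.
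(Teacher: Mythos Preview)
Your proposal is correct and follows essentially the same approach as the paper: induction on $n$, a fixed-point map obtained by freezing the reaction arguments, well-posedness of the frozen subproblems via Stampacchia/Lax--Milgram, and a contraction estimate obtained by testing each variational inequality with the other's solution and subtracting the equalities. The only cosmetic differences are that the paper sets up the map on $\Pi_\disc(\cK_\disc)\times\Pi_\disc(X_{\disc,0})$ (the reconstructed images, since those are what feed $f,g$) and that it never invokes $C_\disc$ or Young's inequality---the $\frac{1}{\delta t^{(n+\frac12)}}\|\Pi_\disc(\cdot)\|_{L^2}^2$ term alone controls the right-hand side after dropping the nonnegative gradient contribution, yielding the clean factor $2M\,\delta t^{(n+\frac12)}<1$.
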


\begin{proof}
At any time step $n+1$, assume $p^{(n)}$ and $q^{(n)}$ exist and are unique. The scheme expresses a non-linear square system, whose unknowns are $p^{(n+1)}$ and $q^{(n+1)}$. For any $\bw=(w_1,w_2) \in \Pi_\disc(\cK_\disc) \times \Pi_\disc(X_{\disc,0})$, we can find a unique pair $(U_1,U_2)\in \cK_\disc \times X_{\disc,0}$ satisying
\begin{equation}\label{ex-u-1}
\begin{aligned}
&\frac{1}{\delta t^{(n+\frac{1}{2})}}\dsp\int_\O \Pi_\disc(U_1-p^{(n)})(\x)\Pi_\disc(U_1- \varphi)(\x)\ud \x
+\dsp\int_\O \A(\x)\nabla_\disc p(\x) \cdot \nabla_\disc(U_1- \varphi)(\x)\ud \x\\
&\qquad\leq\dsp\int_\O f(w_1,w_2)\Pi_\disc(U_1-\varphi)(\x) \ud \x, \quad \forall \varphi \in \cK_\disc,\mbox{ and}
\end{aligned}
\end{equation}
\begin{equation}\label{ex-v-1}
\begin{aligned}
&\frac{1}{\delta t^{(n+\frac{1}{2})}}\dsp\int_\O \Pi_\disc(U_2-q^{(n)})(\x)\Pi_\disc \psi(\x) \ud \x
+\dsp\int_\O \B(\x)\nabla_\disc q(\x) \cdot \nabla_\disc \psi(\x)\ud \x\\
&\qquad=\dsp\int_\O g(w_1,w_2)\Pi_\disc \psi(\x) \ud \x, \quad \forall \psi \in X_{\disc,0}.
\end{aligned}
\end{equation}
Let us define the mapping $\cT:\Pi_\disc(\cK_\disc) \times \Pi_\disc(X_{\disc,0}) \to \Pi_\disc(\cK_\disc) \times \Pi_\disc(X_{\disc,0})$, such that $\cT(\bw)=(\Pi_\disc p,\Pi_\disc q)$, for a given $\bw=(w_1,w_2) \in \times \Pi_\disc(\cK_\disc) \times \Pi_\disc(X_{\disc,0})$, and $(U_1,U_2)$ satisfies \eqref{ex-u-1}--\eqref{ex-v-1}. Given that $(p^{(0)},q^{(0)})$ exists and is unique, we only need to show that $\cT$ is a contractive mapping to establish the existence and uniqueness of the approximate solution $(p,q)$ satisfying \eqref{gs-pvi}.

Let $\bw=(w_1,w_2)$, $\tilde\bw=(\tilde w_1,\tilde w_2) \in \Pi_\disc(\cK_\disc)\times \Pi_\disc(X_{\disc,0})$, such that $\cT(\bw)=(\Pi_\disc p,\Pi_\disc q)$ and $\cT(\tilde\bw)=(\Pi_\disc \tilde U_1,\Pi_\disc \tilde U_2)$. From \eqref{ex-u-1} and \eqref{ex-v-1}, we have
\begin{equation}\label{ex-u-1-a}
\begin{aligned}
&\frac{1}{\delta t^{(n+\frac{1}{2})}}\dsp\int_\O \Pi_\disc(U_1-p^{(n)})(\x)\Pi_\disc (U_1-\varphi)(\x)\ud \x
+\dsp\int_\O \A(\x)\nabla_\disc p(\x) \cdot \nabla_\disc(U_1-\varphi)(\x)\ud \x\\
&\qquad\leq\dsp\int_\O f(w_1, w_2)\Pi_\disc(U_1-\varphi)(\x) \ud \x, \quad \forall \varphi \in \cK_\disc,
\end{aligned}
\end{equation}
\begin{equation}\label{ex-u-1-b}
\begin{aligned}
&\frac{1}{\delta t^{(n+\frac{1}{2})}}\dsp\int_\O \Pi_\disc(\tilde U_1-p^{(n)})(\x)\Pi_\disc(\tilde U_1-\varphi)(\x) \ud \x
+\dsp\int_\O \A(\x)\nabla_\disc \tilde U_1(\x) \cdot \nabla_\disc(\tilde U_1-\varphi)(\x)\ud \x\\
&\qquad\leq\dsp\int_\O f(\tilde w_1, \tilde w_2)\Pi_\disc(\tilde U_1-\varphi)(\x) \ud \x, \quad \forall \varphi \in \cK_\disc,
\end{aligned}
\end{equation}
\begin{equation}\label{ex-v-1-a}
\begin{aligned}
&\frac{1}{\delta t^{(n+\frac{1}{2})}}\dsp\int_\O \Pi_\disc(U_2-q^{(n)})(\x)\Pi_\disc \psi(\x)
+\dsp\int_\O \B(\x)\nabla_\disc q(\x) \cdot \nabla_\disc \psi(\x)\ud \x\\
&\qquad=\dsp\int_\O g(w_1, w_2)\Pi_\disc \psi(\x) \ud \x, \quad \forall \psi \in X_{\disc,0},\mbox{ and }
\end{aligned}
\end{equation}
\begin{equation}\label{ex-v-1-b}
\begin{aligned}
&\frac{1}{\delta t^{(n+\frac{1}{2})}}\dsp\int_\O \Pi_\disc(\tilde U_2-q^{(n)})(\x)\Pi_\disc \psi(\x)
+\dsp\int_\O \B(\x)\nabla_\disc  \tilde U_2(\x) \cdot \nabla_\disc \psi(\x)\ud \x\\
&\qquad=\dsp\int_\O g(\tilde w_1, \tilde w_2)\Pi_\disc \psi(\x) \ud \x, \quad \forall \psi \in X_{\disc,0}.
\end{aligned}
\end{equation}
Take $\varphi:=\tilde U_1$ in \eqref{ex-u-1-a} and $\varphi:=U_1$ in \eqref{ex-u-1-b} to obtain
\begin{equation}\label{new-ex-u-1-a}
\begin{aligned}
&\frac{1}{\delta t^{(n+\frac{1}{2})}}\dsp\int_\O \Pi_\disc(p^{(n)}-U_1)(\x)\Pi_\disc (\tilde U_1-U_1)(\x)\ud \x
-\dsp\int_\O \A(\x)\nabla_\disc p(\x) \cdot \nabla_\disc(\tilde U_1-U_1)(\x)\ud \x\\
&\qquad\leq-\dsp\int_\O f(w_1, w_2)\Pi_\disc(\tilde U_1-U_1)(\x) \ud \x,\mbox{ and }
\end{aligned}
\end{equation}
\begin{equation}\label{new-ex-u-1-b}
\begin{aligned}
&\frac{1}{\delta t^{(n+\frac{1}{2})}}\dsp\int_\O \Pi_\disc(\tilde U_1-p^{(n)})(\x)\Pi_\disc(\tilde U_1-U_1)(\x) \ud \x
+\dsp\int_\O \A(\x)\nabla_\disc \tilde U_1(\x) \cdot \nabla_\disc(\tilde U_1-U_1)(\x)\ud \x\\
&\qquad\leq\dsp\int_\O f(\tilde w_1, \tilde w_2)\Pi_\disc(\tilde U_1-U_2)(\x) \ud \x.
\end{aligned}
\end{equation}
Adding Inequality \eqref{new-ex-u-1-a} to Inequality \eqref{new-ex-u-1-b} results in
\begin{equation}\label{new-ex-u-2-a}
\begin{aligned}
&\frac{1}{\delta t^{(n+\frac{1}{2})}}\dsp\int_\O \Pi_\disc(\tilde U_1-U_1)(\x)\Pi_\disc (\tilde U_1-U_1)(\x)\ud \x
+\dsp\int_\O \A(\x)\nabla_\disc (\tilde U_1-U_1)(\x) \cdot \nabla_\disc(\tilde U_1-U_1)(\x)\ud \x\\
&\qquad\leq\dsp\int_\O (f(\tilde w_1, \tilde w_2)-f(w_1, w_2))\Pi_\disc(\tilde U_1-U_1)(\x) \ud \x.
\end{aligned}
\end{equation}
Subtracting Equation \eqref{ex-v-1-b} from Equation \eqref{ex-v-1-a} leads to
\begin{equation}\label{ex-v-2}
\begin{aligned}
&\frac{1}{\delta t^{(n+\frac{1}{2})}}\dsp\int_\O \Pi_\disc(U_2-\tilde U_2)(\x)\Pi_\disc \psi(\x)
+\dsp\int_\O \B(\x)\nabla_\disc (U_2-\tilde U_2)(\x) \cdot \nabla_\disc \psi(\x)\ud \x\\
&\qquad=\dsp\int_\O \left(g(w_1, w_2)
-g(\tilde w_1, \tilde w_2) \right)\Pi_\disc \psi(\x) \ud \x,\quad \forall \psi \in X_{\disc,0}.
\end{aligned}
\end{equation}
Set $\psi=U_2-\tilde U_2\in X_{\disc,0}$ in \eqref{ex-v-2}, together with \eqref{new-ex-u-1-b}, to derive
\begin{equation}\label{ex-u-3}
\begin{aligned}
&\frac{1}{\delta t^{(n+\frac{1}{2})}}\|\Pi_\disc(U_1-\tilde U_1)\|_{L^2(\O)}^2
+d_1\|\nabla_\disc(U_1-\tilde U_1)\|_{L^2(\O)^d}^2\\
&\qquad\leq \dsp\int_\O \left(f(w_2, w_1)
-f(\tilde w_1, \tilde w_2) \right)\Pi_\disc(U_1-\tilde U_1)(\x) \ud \x, \mbox{ and }
\end{aligned}
\end{equation}
\begin{equation}\label{ex-v-3}
\begin{aligned}
&\frac{1}{\delta t^{(n+\frac{1}{2})}}\|\Pi_\disc(U_2-\tilde U_2)\|_{L^2(\O)}^2
+d_1\|\nabla_\disc(U_2-\tilde U_2)\|_{L^2(\O)^d}^2\\
&\qquad\leq \dsp\int_\O \left(g(w_1, w_2)
-g(\tilde w_1, \tilde w_2) \right)\Pi_\disc(U_2-\tilde U_2)(\x) \ud \x.
\end{aligned}
\end{equation}
From the Cauchy–Schwarz inequality and the Lipschitz continuity conditions on $f$ and $g$, bounds on the right-hand sides of the above inequalities can be established as follows:
\begin{equation}\label{ex-u-4}
\begin{aligned}
&\dsp\int_\O \left(f(w_1, w_2)
-f(\tilde w_1, \tilde w_2) \right)\Pi_\disc(U_1-\tilde U_1)(\x) \ud \x\\
&\leq M\left(\|w_1 - \tilde w_1\|_{L^2(\O)} + \|w_2 - \tilde w_2\|_{L^2(\O)^d}  \right)\|\Pi_\disc(U_1-\tilde U_1)\|_{L^2(\O)},\mbox{ and}
\end{aligned}
\end{equation}
\begin{equation}\label{ex-v-4}
\begin{aligned}
&\dsp\int_\O \left(g(w_1, w_2)
-g(\tilde w_1, \tilde w_2) \right)\Pi_\disc(U_2-\tilde U_2)(\x) \ud \x\\
&\leq M\left(\|w_1 - \tilde w_1\|_{L^2(\O)} + \|w_2 - \tilde w_2\|_{L^2(\O)^d}  \right)\|\Pi_\disc(U_2-\tilde U_2)\|_{L^2(\O)}.
\end{aligned}
\end{equation}
injecting these relations into \eqref{ex-u-3} and into \eqref{ex-v-3} gives
\begin{equation*}
\begin{aligned}
\frac{1}{\delta t^{(n+\frac{1}{2})}}\|\Pi_\disc(U_1-\tilde U_1)\|_{L^2(\O)}
\quad\leq M\left(\|w_1 - \tilde w_1\|_{L^2(\O)} + \|w_2 - \tilde w_2\|_{L^2(\O)}\right), \mbox{ and }
\end{aligned}
\end{equation*}
\begin{equation*}
\begin{aligned}
\frac{1}{\delta t^{(n+\frac{1}{2})}}\|\Pi_\disc(U_2-\tilde U_2)\|_{L^2(\O)}
\quad\leq M\left(\|w_1 - \tilde w_1\|_{L^2(\O)} + \|w_2 - \tilde w_2\|_{L^2(\O)}\right),
\end{aligned}
\end{equation*}
which lead to
\begin{equation*}\label{ex-6}
\begin{aligned}
\frac{1}{\delta t^{(n+\frac{1}{2})}}[\|\Pi_\disc(U_1-\tilde U_1)\|_{L^2(\O)}
&+\|\Pi_\disc(U_2-\tilde U_2)\|_{L^2(\O)} ]\\
&\leq 2M \left(\|w_1 - \tilde w_1\|_{L^2(\O)} + \|w_2 - \tilde w_2\|_{L^2(\O)}\right),
\end{aligned}
\end{equation*}
Since $\cT(\bw)=(\Pi_\disc p,\Pi_\disc q)$ and $\cT(\tilde\bw)=(\Pi_\disc \tilde U_1,\Pi_\disc \tilde U_2)$, the above inequality yields the following relation
\begin{equation*}
\begin{aligned}
\frac{1}{\delta t^{(n+\frac{1}{2})}}\|g(w_1,w_2)-g(\tilde w_1,\tilde w_2)\|_{L^2(\O)}
\leq 2L\|(w_1,w_2)-(\tilde w_1,\tilde w_2)\|_{L^2(\O)},
\end{aligned}
\end{equation*}
which proves that $\cT$ is a contraction mapping on $\Pi_\disc(\cK_\disc) \times \Pi_\disc(X_{\disc,0})$ provided that $\delta t^{(n+\frac{1}{2})}< \frac{1}{2M}$ and it has a unique fixed point $(w_1,w_2)=\cT(w_1,w_2)$. This shows that the existence and uniqueness of the pair $(\Pi_\disc p,\Pi_\disc q)$, such that $(U_1,U_2)$ solves the problem \eqref{gs-pvi}. Using the relations $\Pi_\disc p=\Pi_\disc \tilde U_1 $ in \eqref{ex-u-3} and the relation $\Pi_\disc q=\Pi_\disc \tilde U_2$ in \eqref{ex-v-3}, we obtain $(\nabla_\disc p,\nabla_\disc q)= (\nabla_\disc \tilde U_1=\nabla_\disc \tilde U_2)$, which completes the proof.
\end{proof}

\subsection*{Notations}
In what follows, we use the following notations for simplicity.
\begin{subequations}\label{eq-def-S}
\begin{equation}\label{eq-def-S-1}
\cS(\varphi):=S_\disc(\varphi,P_\disc \varphi),\mbox{ for all } \varphi=\bar p \mbox{ or $\dr_t\bar p$, and }
\end{equation}
\begin{equation}\label{eq-def-S-1}
\widetilde\cS(\varphi):=S_\disc(\varphi,\widetilde P_\disc \varphi),\mbox{ for all } \varphi=\bar q \mbox{ or $\dr_t\bar q$}.
\end{equation}
\end{subequations}
Also, we denote the error coming from the interpolation of the initial conditions by $\cR_\disc^0$ and $\widetilde \cR_\disc^0$, which are defined by
\begin{equation}\label{eq-err-inter-0}
\cR_\disc^0=\|p_0 - \Pi_\disc J_\disc p_0 \|_{L^2(\O)}\mbox{ and } 
\widetilde \cR_\disc^0=\|q_0 - \Pi_\disc \widetilde J_\disc q_0 \|_{L^2(\O)}.
\end{equation}
We set
\begin{equation}\label{eq-E}
\cR^{(k)}:=P_\disc\bar p(t^{(k)})-p^{(k)} \mbox{ and } \widetilde \cR^{(k)}:=\widetilde P_\disc\bar q(t^{(k)})-q^{(k)}, \mbox{ for } k=1,...,N.
\end{equation}
Finally, with letting $(\bar p^{(0)},\bar q^{(0)})=(\bar p(0),\bar q(0))$, the notation $\zeta^{(n+1)}$ denotes the averaging over time in the interval $(t^{(n)}, t^{(n+1)})$, in which $\xi=f,g,\bar p, \bar q,\dr_t\bar p,$ or $\dr_t \bar q$. It defined by, for $n\in \{0,...,N-1\}$,
\[
\xi^{(n+1)}(\x):=\dsp\frac{1}{\delta t^{(n+\frac{1}{2})}}\dsp\int_{t^{(n)}}^{t^{(n+1)}}
\zeta(\x,t) \ud t.
\]

\begin{lemma}\label{lemma-reaction}
Let Assumptions \ref{assump-1} hold, $(\bar p,\bar q)$ be the solution to the problem \eqref{pvi-obs-weak}, and $(p,q)$ be the solution to the scheme \eqref{gs-pvi}. If $\bar p,\bar q:[0,T] \to H^1(\O)$ are Lipschitz continuous functions, then the following estimates hold:
\begin{equation}\label{eq-proof-F-1}
\begin{aligned}
&\int_\O \Pi_\disc \cR^{(n+1)}(\x)\Big[f(\bar p^{(n+1)},\bar q^{(n+1)})-f(\Pi_\disc p^{(n+1)},\Pi_\disc q^{(n+1)}) \Big] \ud \x\\
&\leq C_1\|\Pi_\disc  \cR^{(n+1)}\|_{L^2(\O)}^2
+C_2\|\Pi_\disc \widetilde \cR^{(n+1)}\|_{L^2(\O)}^2\\
&\quad+C_3\Big[\delta t_\disc+\cS(\bar p(t^{(n+1)}))\Big]^2
+C_4\Big[\delta t_\disc+\widetilde\cS(\bar q(t^{(n+1)}))\Big]^2,\mbox{ and }
\end{aligned}
\end{equation}
\begin{equation}\label{eq-proof-G-1}
\begin{aligned}
&\int_\O \Pi_\disc \widetilde \cR^{(n+1)}(\x)\Big[g(\bar p^{(n+1)},\bar q^{(n+1)})-g(\Pi_\disc p^{(n+1)},\Pi_\disc q^{(n+1)}) \Big] \ud \x\\
&\leq C_5\|\Pi_\disc  \cR^{(n+1)}\|_{L^2(\O)}^2
+C_6\|\Pi_\disc \widetilde \cR^{(n+1)}\|_{L^2(\O)}^2\\
&\quad+C_7 \Big[\delta t_\disc+\cS(\bar p(t^{(n+1)}))\Big]^2
+C_8\Big[\delta t_\disc+\widetilde\cS(\bar q(t^{(n+1)}))\Big]^2,
\end{aligned}
\end{equation}
where $(C_i)_{i=1,...,8}$ depend only on $M$ and small positive parameters linked to the Young's inequality.
\end{lemma}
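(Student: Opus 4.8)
The plan is to estimate the reaction terms by repeatedly inserting and subtracting intermediate quantities, then invoking the Lipschitz bounds on $f$ and $g$ together with the consistency estimates for the interpolants $P_\disc$ and $\widetilde P_\disc$. I would begin with \eqref{eq-proof-F-1}. By Cauchy--Schwarz and the Lipschitz continuity of $f$ with constant $M\geq M_1$,
\[
\int_\O \Pi_\disc \cR^{(n+1)}\big[f(\bar p^{(n+1)},\bar q^{(n+1)})-f(\Pi_\disc p^{(n+1)},\Pi_\disc q^{(n+1)})\big]\ud\x
\leq M\|\Pi_\disc\cR^{(n+1)}\|_{L^2(\O)}\Big(\|\bar p^{(n+1)}-\Pi_\disc p^{(n+1)}\|_{L^2(\O)}+\|\bar q^{(n+1)}-\Pi_\disc q^{(n+1)}\|_{L^2(\O)}\Big).
\]
The core of the argument is then to control the two norms in the parenthesis in terms of the quantities appearing on the right-hand side of \eqref{eq-proof-F-1}.

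For the first one, I would write the triangle-inequality splitting
\[
\|\bar p^{(n+1)}-\Pi_\disc p^{(n+1)}\|_{L^2(\O)}
\leq \|\bar p^{(n+1)}-\bar p(t^{(n+1)})\|_{L^2(\O)}
+\|\bar p(t^{(n+1)})-\Pi_\disc P_\disc\bar p(t^{(n+1)})\|_{L^2(\O)}
+\|\Pi_\disc\cR^{(n+1)}\|_{L^2(\O)},
\]
recalling $\cR^{(n+1)}=P_\disc\bar p(t^{(n+1)})-p^{(n+1)}$ from \eqref{eq-E}. The first term on the right is bounded by the time-averaging error: since $\bar p:[0,T]\to H^1(\O)$ is Lipschitz, $\|\bar p^{(n+1)}-\bar p(t^{(n+1)})\|_{L^2(\O)}\lesssim \delta t^{(n+\frac12)}\leq \delta t_\disc$. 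The second term is exactly controlled by $\cS(\bar p(t^{(n+1)}))=S_\disc(\bar p(t^{(n+1)}),P_\disc\bar p(t^{(n+1)}))$ via \eqref{new-eq-proof-SD-u} (note $\bar p(t^{(n+1)})\in H^2(\O)\cap\cK$ under the regularity hypotheses). An identical splitting for $\bar q$, using $\widetilde P_\disc$, \eqref{new-eq-proof-100} and $\widetilde\cS(\bar q(t^{(n+1)}))$, gives $\|\bar q^{(n+1)}-\Pi_\disc q^{(n+1)}\|_{L^2(\O)}\lesssim \delta t_\disc+\widetilde\cS(\bar q(t^{(n+1)}))+\|\Pi_\disc\widetilde\cR^{(n+1)}\|_{L^2(\O)}$. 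Substituting both bounds and then applying Young's inequality $ab\leq \varepsilon a^2+\tfrac{1}{4\varepsilon}b^2$ to each product (splitting the cross terms so that every $\|\Pi_\disc\cR^{(n+1)}\|_{L^2}$-factor is absorbed with a small parameter, and the $\delta t_\disc+\cS(\cdot)$ and $\delta t_\disc+\widetilde\cS(\cdot)$ factors are grouped into the squared brackets) yields \eqref{eq-proof-F-1} with constants $C_1,\dots,C_4$ depending only on $M$ and the chosen Young parameters. The proof of \eqref{eq-proof-G-1} is verbatim the same with $g$, $M_2\leq M$, and the roles of the $\cR$-norm factors swapped so that $\|\Pi_\disc\widetilde\cR^{(n+1)}\|_{L^2(\O)}$ multiplies the Lipschitz bound.

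The main obstacle is bookkeeping rather than conceptual: one must be careful that the consistency estimates \eqref{new-eq-proof-SD-u}--\eqref{new-eq-proof-100} apply to $\bar p(t^{(n+1)})$ and $\bar q(t^{(n+1)})$ (this needs the spatial $H^2$-regularity of the exact solution at each fixed time, which is where Assumptions~\ref{assump-1} and the Lipschitz-in-time hypothesis of the lemma are used), and that the $L^2(\O)^d$-norm appearing in the definition \eqref{consist-1} of $S_\disc$ — which also controls the gradient reconstruction error — is harmlessly discarded when only the $L^2(\O)$-part is needed here. Everything else is a routine chain of triangle inequalities, Cauchy--Schwarz, and Young's inequality, so no delicate estimate is required; the statement is essentially a packaging of the consistency and time-averaging errors into a form ready for the discrete Gronwall argument that will follow.
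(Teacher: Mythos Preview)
Your proposal is correct and follows essentially the same route as the paper: Cauchy--Schwarz plus the Lipschitz bound on $f$, a triangle-inequality splitting of $\|\bar p^{(n+1)}-\Pi_\disc p^{(n+1)}\|_{L^2(\O)}$ through $\bar p(t^{(n+1)})$ and $\Pi_\disc P_\disc\bar p(t^{(n+1)})$ (and analogously for $\bar q$ via $\widetilde P_\disc$), then the time-Lipschitz bound, the consistency estimates \eqref{new-eq-proof-SD-u}--\eqref{new-eq-proof-100}, and Young's inequality to finish. The paper organises the same computation by naming the two pieces $\mathbb I_1,\mathbb I_2$ and giving the explicit Young-parameter constants, but the ingredients and order of steps are identical.
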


\begin{proof}
From the Cauchy–Schwarz inequality and the Lipschitz continuity assumption for $f$ and $g$, one can write
\begin{equation}\label{eq-proof-F}
\begin{aligned}
&\int_\O \Pi_\disc \cR^{(n+1)}(\x)\Big[f(\bar p^{(n+1)},\bar q^{(n+1)})-f(\Pi_\disc p^{(n+1)},\Pi_\disc q^{(n+1)}) \Big] \ud \x\\
&\leq M\|\Pi_\disc \cR^{(n+1)}\|_{L^2(\O)}
\|\bar p^{(n+1)}-\Pi_\disc p^{(n+1)}\|_{L^2(\O)}:=\mathbb I_1\\
&\quad+M\|\Pi_\disc \cR^{(n+1)}\|_{L^2(\O)}
\|\bar q^{(n+1)}-\Pi_\disc q^{(n+1)}\|_{L^2(\O)}:=\mathbb I_2.
\end{aligned}
\end{equation}
The application of the interpolants $P_\disc$ to $\varphi:=\bar p(t^{(n+1)})$ and $\widetilde P_\disc$  to $\varphi:=\bar q(t^{(n+1)})$ leads to, thanks to \eqref{new-eq-proof-SD-u}, \eqref{new-eq-proof-100}, and to the Lipschitz continuity of the solution $(\bar p,\bar q)$,
\begin{equation}\label{eq-proof-Pi-u}
\begin{aligned}
\| \bar p^{(n+1)}&-\Pi_\disc  P_\disc \bar p(t^{(n+1)}) \|_{L^2(\O)}\\
&\leq \| \bar p^{(n+1)}-\bar p(t^{(n+1)}) \|_{L^2(\O)} 
+\|\bar p(t^{(n+1)}) -\Pi_\disc  P_\disc \bar p(t^{(n+1)}) \|_{L^2(\O)}\\
&\leq \delta t_\disc+\cS(\bar p(t^{(n+1)})),\mbox{ and }
\end{aligned}
\end{equation}
\begin{equation}\label{eq-proof-Pi-v}
\begin{aligned}
\| \bar q^{(n+1)}&-\Pi_\disc  \widetilde P_\disc \bar q(t^{(n+1)}) \|_{L^2(\O)}\\
&\leq \| \bar q^{(n+1)}-\bar q(t^{(n+1)}) \|_{L^2(\O)}+
\|\bar q(t^{(n+1)}) -\Pi_\disc  \widetilde P_\disc \bar q(t^{(n+1)}) \|_{L^2(\O)}\\
&\leq \delta t_\disc+\widetilde\cS(\bar q(t^{(n+1)})).
\end{aligned}
\end{equation}
We obtain by adding the term $\Pi_\disc P_\disc \bar p(t^{(n+1)})$, using \eqref{eq-proof-Pi-u}, and applying the Young's inequality with a small parameter $\varepsilon_1>0$
\begin{equation}\label{eq-proof-T1}
\begin{aligned}
\mathbb I_1
&\leq M\|\Pi_\disc \cR^{(n+1)}\|_{L^2(\O)}\times\\
&\quad\Big[
\|\bar p^{(n+1)}-\Pi_\disc P_\disc \bar p(t^{(n+1)})\|_{L^2(\O)}
+\|\Pi_\disc P_\disc \bar p(t^{(n+1)})-\Pi_\disc p^{(n+1)}\|_{L^2(\O)}
\Big]\\
&\leq M\|\Pi_\disc \cR^{(n+1)}\|_{L^2(\O)}
\Big[
\delta t_\disc+\cS(\bar p(t^{(n+1)}))
+\|\Pi_\disc \cR^{(n+1)}\|_{L^2(\O)}
\Big]\\
&\leq \frac{M^2\varepsilon_1+2M}{2}\|\Pi_\disc \cR^{(n+1)}\|_{L^2(\O)}^2
+\frac{1}{2\varepsilon_1}\Big[\delta t_\disc+\cS(\bar p(t^{(n+1)}))\Big]^2.
\end{aligned}
\end{equation}
Arguing as those above (with using \eqref{eq-proof-Pi-v}), it follows that
\begin{equation}\label{eq-proof-T2}
\begin{aligned}
\mathbb I_2 &\leq M\|\Pi_\disc \cR^{(n+1)}\|_{L^2(\O)}\times\\
&\quad \Big[
\|\bar q^{(n+1)}-\Pi_\disc P_\disc \bar q(t^{(n+1)})\|_{L^2(\O)}
+\|\Pi_\disc P_\disc \bar q(t^{(n+1)})-\Pi_\disc q^{(n+1)}\|_{L^2(\O)}
\Big]\\
&\leq M\|\Pi_\disc \cR^{(n+1)}\|_{L^2(\O)}
\Big[
\delta t_\disc+\widetilde\cS(\bar q(t^{(n+1)}))
+\|\Pi_\disc \widetilde \cR^{(n+1)}\|_{L^2(\O)}
\Big]\\
&\leq \frac{M^2(\varepsilon_2+\varepsilon_3)}{2}\|\Pi_\disc \cR^{(n+1)}\|_{L^2(\O)}^2
+\frac{1}{2\varepsilon_2}\Big[\delta t_\disc+\widetilde\cS(\bar q(t^{(n+1)}))\Big]^2\\
&\quad+\frac{1}{2\varepsilon_3}\|\Pi_\disc \widetilde \cR^{(n+1)}\|_{L^2(\O)}^2,
\end{aligned}
\end{equation}
where $\varepsilon_2,\varepsilon_3$ are small positive parameters connected to the Young's inequality. Combining \eqref{eq-proof-T1}, \eqref{eq-proof-T2}, and \eqref{eq-proof-F} establishes the first estimate \eqref{eq-proof-F-1}, where $C_1:=\frac{M^2(\varepsilon_1+\varepsilon_2+\varepsilon_3)+2M}{2}$, $C_2:=\frac{1}{2\varepsilon_3}$, $C_3:=\frac{1}{2\varepsilon_1}$, and $C_4:=\frac{1}{\varepsilon_3}$.

With a similar manner, we can prove the second estimate \eqref{eq-proof-G-1}, in which $C_5:=\frac{1}{2\varepsilon_6}$, $C_6:=\frac{M^2(\varepsilon_4+\varepsilon_5+\varepsilon_6)+2M}{2}$, $C_7:=\frac{1}{\varepsilon_6}$, and $C_8:=\frac{1}{2\varepsilon_4}$, where $(\varepsilon_i)_{i=4,5,6}$ are small positive parameters connected to the Young's inequality.
\end{proof}

\begin{theorem}\label{thm-err-rm} 
Let Assumptions \ref{assump-1} be satisfied, $\disc$ be a gradient discretisation, and  $(\bar p,\bar q)$ and $(p,q)$ be the solutions to the continuous problem \eqref{pvi-obs-weak} and to the approximate problem \eqref{gs-pvi}, respectively. If $(\bar p,\bar q) \in (W^{1,\infty}(0,T;W^{2,\infty}(\O)))^2$, $\delta t^{(n+\frac{1}{2})}< \frac{1}{2M}$, and $\A$ and $\B$ are Lipschitz continuous, then the following estimates hold:
\begin{equation}\label{eq-est-1-theorem}
\begin{aligned}
&\max_{t\in[0,T]}\Big[\|\Pi_\disc p(\cdot,t)-\bar p(\cdot,t) \|_{L^2(\O)}
+\|\Pi_\disc q(\cdot,t)-\bar q(\cdot,t) \|_{L^2(\O)}\Big]\\
&\leq C\Big[ \delta t_\disc+\cS(\bar p(t^{(N)}))
+\widetilde\cS(\bar q(t^{(N)}))
+\cS(\dr_t\bar p^{(N)})
+\widetilde\cS(\dr_t\bar q^{(N)})\\
&\quad+W_\disc(\A\nabla\bar p^{(N)})
+W_\disc(\B\nabla\bar q^{(N)})
+\widetilde\cS(\bar q(0)) + \widetilde \cR_\disc^0
+\cS(\bar p(0)) + \cR_\disc^0\\
&\quad+\Big(\dsp\sum_{n=0}^{N-1} \delta t^{(n+\frac{1}{2})}\cM_\disc^{(n+1)}\Big)^{1/2}
\Big],
\end{aligned}
\end{equation}
and
\begin{equation}\label{eq-est-2-theorem}
\begin{aligned}
&\|\nabla_\disc p-\nabla\bar p \|_{L^2(\O)^d}
+\|\nabla_\disc q-\nabla\bar q \|_{L^2(\O)^d}\\
&\leq C\dsp\sum_{n=0}^{N-1} \delta t^{(n+\frac{1}{2})}\Big[ \delta t_\disc+\cS(\bar p(t^{(n+1)}))
+\widetilde\cS(\bar q(t^{(n+1)}))
+\cS(\dr_t\bar p^{(N)})
+\widetilde\cS(\dr_t\bar q^{(n+1)})\\
&\quad+W_\disc(\A\nabla\bar p^{(n+1)})
+W_\disc(\B\nabla\bar q^{(n+1)})
+\widetilde\cS(\bar q(0)) + \widetilde \cR_\disc^0
+\cS(\bar p(0) + \cR_\disc^0\\
&\quad+(\cM_\disc^{(n+1)})^{1/2}
\Big],
\end{aligned}
\end{equation}
where the term $\cM_\disc^{(n+1)}$ is defined by
\begin{equation}\label{eq-M}
\begin{aligned}
\cM_\disc^{(n+1)}&=:
\dsp\int_\O\Big[\chi_\disc(\x)-\Pi_\disc P_\disc \bar p(t^{(n+1)})\Big]\\
\quad\quad\quad&\times\Big[f(\bar p^{(n+1)},\bar q^{(n+1)})+\div(\A(\x)\nabla\bar p^{(n+1)}(\x))-\dr_t\bar p^{(n+1)}(\x)\Big] \ud \x.
\end{aligned}
\end{equation}
Here $\cR_\disc^0$ and $\widetilde \cR_\disc^0$ are defined by \eqref{eq-err-inter-0} and $C$ is a positive constant depending only on $T$, $M$, $d_1$, $d_2$, $C_\disc$, and on small positive parameters linked to the Young's inequality.
\end{theorem}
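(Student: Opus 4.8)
The plan is to derive an energy estimate for the errors $\cR^{(k)}=P_\disc\bar p(t^{(k)})-p^{(k)}$ and $\widetilde\cR^{(k)}=\widetilde P_\disc\bar q(t^{(k)})-q^{(k)}$ by subtracting a consistency-perturbed version of the continuous weak formulation from the gradient scheme, summing in time, and closing the resulting inequality with a discrete Gronwall argument. First I would write the time-averaged continuous equations \eqref{pvi-obs-w1}--\eqref{pvi-obs-w2} at the level of each subinterval $(t^{(n)},t^{(n+1)})$, replacing $\dr_t\bar p$ and the diffusion term by their averages $\dr_t\bar p^{(n+1)}$, $\A\nabla\bar p^{(n+1)}$, etc. Using the barrier function $\chi$ as a test function against the complementarity relation \eqref{pvi-obs-obs1} produces the term $\cM_\disc^{(n+1)}$ of \eqref{eq-M}: this is the discrete analogue of the variational inequality defect, and it arises precisely because $\Pi_\disc P_\disc\bar p(t^{(n+1)})$ is only an approximate admissible function, not necessarily $\geq\chi_\disc$ with equality on the contact set. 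The limit-conformity functional $W_\disc$ enters when I integrate the diffusion term by parts against $\nabla_\disc(\cdot)$ to match the discrete bilinear form, generating the terms $W_\disc(\A\nabla\bar p^{(n+1)})$ and $W_\disc(\B\nabla\bar q^{(n+1)})$; the consistency functionals $\cS,\widetilde\cS$ enter from replacing $\bar p,\bar q,\dr_t\bar p,\dr_t\bar q$ by their interpolants via \eqref{new-eq-proof-SD-u}--\eqref{new-eq-proof-100}.

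Next I would test the error equation for the $p$-component with $\varphi=p^{(n+1)}$ in the scheme and $\varphi=P_\disc\bar p(t^{(n+1)})\in\cK_\disc$ (admissible by construction in Definition~\ref{def-gd-pvi-obs}) in the perturbed continuous inequality, subtract, and do likewise for the $q$-component with $\psi=\Pi_\disc\widetilde\cR^{(n+1)}$. The key algebraic device is the standard telescoping identity
\[
\int_\O \delta_\disc^{(n+\frac12)}\big(\Pi_\disc\cR\big)\,\Pi_\disc\cR^{(n+1)}\ud\x
\geq \frac{1}{2\delta t^{(n+\frac12)}}\Big(\|\Pi_\disc\cR^{(n+1)}\|_{L^2(\O)}^2-\|\Pi_\disc\cR^{(n)}\|_{L^2(\O)}^2\Big),
\]
which turns the discrete time derivative into a difference of squared norms, together with coercivity of $\A,\B$ (eigenvalues $\geq d_1$) to control $\|\nabla_\disc\cR^{(n+1)}\|^2$ and $\|\nabla_\disc\widetilde\cR^{(n+1)}\|^2$ from below. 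The reaction nonlinearities $f,g$ are handled by Lemma~\ref{lemma-reaction}, which bounds the cross terms $\int_\O\Pi_\disc\cR^{(n+1)}[f(\bar p^{(n+1)},\bar q^{(n+1)})-f(\Pi_\disc p^{(n+1)},\Pi_\disc q^{(n+1)})]\ud\x$ (after inserting $f(\Pi_\disc P_\disc\bar p^{(n+1)},\cdot)$ and using Lipschitz continuity) by $\varepsilon$-weighted combinations of $\|\Pi_\disc\cR^{(n+1)}\|^2$, $\|\Pi_\disc\widetilde\cR^{(n+1)}\|^2$ and the consistency-plus-time-step squares.

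Summing over $n=0,\dots,k-1$, the squared-norm differences telescope to $\|\Pi_\disc\cR^{(k)}\|^2+\|\Pi_\disc\widetilde\cR^{(k)}\|^2$ minus the initial errors (controlled by $\cR_\disc^0,\widetilde\cR_\disc^0$ via \eqref{eq-err-inter-0}, after also accounting for $\|\Pi_\disc P_\disc\bar p(0)-p_0\|\leq\cS(\bar p(0))+\cR_\disc^0$). Choosing the Young parameters $\varepsilon_i$ small enough and using $\delta t^{(n+\frac12)}<\frac{1}{2M}$ to absorb the $\|\Pi_\disc\cR^{(n+1)}\|^2$ terms on the left, I arrive at
\[
\|\Pi_\disc\cR^{(k)}\|_{L^2}^2+\|\Pi_\disc\widetilde\cR^{(k)}\|_{L^2}^2
+\sum_{n=0}^{k-1}\delta t^{(n+\frac12)}d_1\big(\|\nabla_\disc\cR^{(n+1)}\|^2+\|\nabla_\disc\widetilde\cR^{(n+1)}\|^2\big)
\leq C\big(\mathcal{G}^2+\sum_{n=0}^{k-1}\delta t^{(n+\frac12)}\|\Pi_\disc\cR^{(n+1)}\|^2\big),
\]
where $\mathcal{G}$ collects all the consistency/limit-conformity/initial/$\cM_\disc$ terms on the right of \eqref{eq-est-1-theorem}. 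Discrete Gronwall (valid since $\sum\delta t^{(n+\frac12)}\leq T$) removes the last sum and yields the $L^\infty(0,T;L^2)$ bound \eqref{eq-est-1-theorem} after a triangle inequality $\|\Pi_\disc p-\bar p\|\leq\|\Pi_\disc\cR^{(k)}\|+\|\Pi_\disc P_\disc\bar p(t^{(k)})-\bar p\|$ plus \eqref{eq-proof-Pi-u}--\eqref{eq-proof-Pi-v}; the gradient estimate \eqref{eq-est-2-theorem} follows from the already-controlled $\sum\delta t^{(n+\frac12)}\|\nabla_\disc\cR^{(n+1)}\|^2$ term and a second triangle inequality against $\|\nabla_\disc P_\disc\bar p(t^{(k)})-\nabla\bar p\|\leq\cS(\bar p(t^{(k)}))$. \textbf{The main obstacle} I anticipate is the correct book-keeping of the variational-inequality defect: one must test the scheme's inequality \eqref{gs-pvi-obs1} with the \emph{admissible} $P_\disc\bar p(t^{(n+1)})$ and the continuous inequality with a lifting of $p^{(n+1)}$ into $\mathbb K$, and the mismatch between $\chi_\disc$ and $\chi$ (together with $p^{(n+1)}\geq\chi_\disc$ only in the $\Pi_\disc$-sense) is exactly what forces the appearance of $\cM_\disc^{(n+1)}$ rather than a term that vanishes; making sure every sign is right so that $\cM_\disc^{(n+1)}$ lands on the correct side of the inequality with a $+$ sign is the delicate point. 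Everything else is Cauchy--Schwarz, Young, and discrete Gronwall.
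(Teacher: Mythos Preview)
Your proposal is correct and follows essentially the same route as the paper: define $\cR^{(k)},\widetilde\cR^{(k)}$, combine the limit-conformity relation with the time-averaged strong equations, test the scheme \eqref{gs-pvi-obs1} with $\varphi=P_\disc\bar p(t^{(n+1)})\in\cK_\disc$ (your ``$\varphi=p^{(n+1)}$ in the scheme'' is a slip), use the telescoping identity $r(r-s)\geq\frac12 r^2-\frac12 s^2$, invoke Lemma~\ref{lemma-reaction} for the reaction terms, and close with discrete Gronwall and a triangle inequality.

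Two clarifications on points where the paper differs slightly from what you sketch. First, on your ``main obstacle'': the paper does \emph{not} lift $p^{(n+1)}$ into $\mathbb K$ to test the continuous variational inequality. Instead it exploits the assumed $W^{1,\infty}(0,T;W^{2,\infty}(\O))$ regularity to write the pointwise inequality $\dr_t\bar p^{(n+1)}-\div(\A\nabla\bar p^{(n+1)})-f(\bar p^{(n+1)},\bar q^{(n+1)})\geq 0$ a.e., multiplies by $\Pi_\disc p^{(n+1)}-\chi_\disc\geq 0$, integrates, and then successively inserts $\chi$, $\bar p^{(n+1)}$ (using the complementarity \eqref{pvi-obs1} to kill one term), and $\Pi_\disc P_\disc\bar p(t^{(n+1)})$; the residual is exactly $\cM_\disc^{(n+1)}$ with the correct sign, and no lifting construction is needed. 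Second, the condition $\delta t^{(n+\frac12)}<\frac{1}{2M}$ is used only for existence/uniqueness (Lemma~\ref{lemma-1}); in the error proof the $\|\Pi_\disc\cR^{(n+1)}\|_{L^2}^2$ terms coming from Lemma~\ref{lemma-reaction} are absorbed entirely by Gronwall, not by smallness of the time step.
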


\begin{proof}
By applying the interpolant $P_\disc$ to $\varphi=\bar p(t^{(n+1)})$ and the interpolant $\widetilde P_\disc$ to $\varphi=\bar q(t^{(n+1)})$, we obtain the following bounds, thanks to \eqref{new-eq-proof-SD-u} and \eqref{new-eq-proof-100} and to the Lipschitz continuity of $\nabla\bar p,\nabla\bar q: [0,T] \to L^2(\O)^d$ ensured by the hypothesis imposed on the solution.
\begin{equation}\label{eq-proof-1-u}
\begin{aligned}
&\| \nabla\bar p^{(n+1)}-\nabla_\disc  P_\disc \bar p(t^{(n+1)}) \|_{L^2(\O)^d}\\
&\leq \| \nabla\bar p^{(n+1)}-\nabla\bar p(t^{(n+1)}) \|_{L^2(\O)^d}
+\| \nabla\bar p(t^{(n+1)})-\nabla_\disc  P_\disc \bar p(t^{(n+1)}) \|_{L^2(\O)^d}\\
&\leq \delta t_\disc+\cS(\bar p(t^{(n+1)})),\mbox{ and }
\end{aligned}
\end{equation}
\begin{equation}\label{eq-proof-1-v}
\begin{aligned}
&\| \nabla\bar q^{(n+1)}-\nabla_\disc  \widetilde P_\disc \bar q(t^{(n+1)}) \|_{L^2(\O)^d}\\
&\leq \| \nabla\bar q^{(n+1)}-\nabla\bar q(t^{(n+1)}) \|_{L^2(\O)^d}
+\| \nabla\bar q(t^{(n+1)})-\nabla_\disc  \widetilde P_\disc \bar q(t^{(n+1)}) \|_{L^2(\O)^d}\\
&\leq \delta t_\disc+\widetilde\cS(\bar q(t^{(n+1)})).
\end{aligned}
\end{equation}

Since $\dr_t \bar p^{(n+1)},\dr_t \bar q^{(n+1)} \in H^2(\O)$, we can apply the interpolant $P_\disc$ to $\varphi:=\dr_t\bar p^{(n+1)}=\frac{\bar p(t^{(n+1)})-\bar p(t^{(n)})}{\delta t^{(n+\frac{1}{2})}}$, and the interpolant $\widetilde P_\disc$ to $\varphi:=\dr_t\bar q^{(n+1)}=\frac{\bar q(t^{(n+1)})-\bar q(t^{(n)})}{\delta t^{(n+\frac{1}{2})}}$. Note that thanks to the linearity of $P_\disc$ and $\widetilde P_\disc$, and \eqref{new-eq-proof-SD-u} and \eqref{new-eq-proof-100}, the following holds 
\begin{equation}\label{eq-proof-2-u}
\begin{aligned}
\norm{\frac{\Pi_\disc  P_\disc\bar p(t^{(n+1)})-\Pi_\disc  P_\disc \bar p(t^{(n)})}
{\delta t^{(n+\frac{1}{2})}}
-\partial_t \bar p^{(n+1)}}_{L^2(\O)}
&\leq \cS( \partial_t \bar p^{(n+1)}),\mbox{ and }
\end{aligned}
\end{equation}
\begin{equation}\label{eq-proof-2-v}
\begin{aligned}
\Big\| \frac{\Pi_\disc \widetilde P_\disc\bar q(t^{(n+1)})-\Pi_\disc  \widetilde P_\disc \bar q(t^{(n)})}
{\delta t^{(n+\frac{1}{2})}}
-\partial_t \bar q^{(n+1)} \Big\|_{L^2(\O)}
&\leq \cS( \partial_t \bar q^{(n+1)}).
\end{aligned}
\end{equation}

A direct application of the limit--conformity function \eqref{long-rm} to $\bpsi=:\A\nabla\bar p^{(n+1)} \in H_{\div}(\O)$ and to $\bpsi=:\B\nabla\bar q^{(n+1)}\in H_{\div}(\O)$ yields
\begin{equation}\label{eq-limi-conf-u}
\begin{aligned}
\dsp\int_\O \Big[\Pi_\disc w(\x)&\div( \A\nabla\bar p^{(n+1)}(\x))
+\A\nabla\bar p^{(n+1)}(\x)\cdot \nabla_\disc w(\x) \Big] \ud \x \\
&\leq W_\disc(\A\nabla\bar p^{(n+1)}) \| \nabla_\disc w \|_{L^2(\O)^d},\quad \forall w\in X_{\disc,0}, \mbox{ and }
\end{aligned}
\end{equation}
\begin{equation}\label{eq-limi-conf-v}
\begin{aligned}
\int_\O \Big[\Pi_\disc w(\x)&\div( \B\nabla\bar q^{(n+1)}(\x))
+\B\nabla\bar q^{(n+1)}(\x)\cdot \nabla_\disc w(\x) \Big] \ud \x \\
&\leq W_\disc(\B\nabla\bar q^{(n+1)}) \| \nabla_\disc w \|_{L^2(\O)^d},\quad \forall w\in X_{\disc,0}.
\end{aligned}
\end{equation}

Let us focus on each of the above inequalities separately. First, we note that the relations \eqref{pvi-obs1}--\eqref{pvi-obs3} are valid almost everywhere in $\O_T$ due to the conditions imposed on $\bar p$. Taking the average over time in $(t^{(n)},t^{(n+1)})$ ensures that $\dr_t \bar p^{(n+1)}-f(\bar p^{(n+1)},\bar q^{(n+1)}) \geq \div(\nabla\bar p^{(n+1)})$. Using the fact that $p^{(n+1)} \in \cK_\disc$, we obtain
\begin{equation}
\dsp\int_\O \Big(\Pi_\disc p^{(n+1)}-\chi_\disc\Big)\Big(f(\bar p^{(n+1)},\bar q^{(n+1)})+\div(\nabla\bar p^{(n+1)})-\dr_t\bar p^{(n+1)}\Big) \ud x \leq 0,
\end{equation}
which yields, for any $\varphi\in \cK_\disc$,
\begin{equation}\label{eq-diff-obs}
\begin{aligned}
&\int_\O \Big[\Pi_\disc p^{(n+1)}(\x)-\Pi_\disc \varphi(\x)\Big]\div(\nabla\bar p^{(n+1)}(\x)) \ud \x\\
&\leq\int_\O \Big[\chi_\disc(\x)-\Pi_\disc \varphi(\x)\Big]
\Big[f(\bar p^{(n+1)},\bar q^{(n+1)})+\div(\nabla\bar p^{(n+1)}(\x))-\dr_t\bar p^{(n+1)}(\x)\Big] \ud \x\\
&\quad-\int_\O \Big[\Pi_\disc p^{(n+1)}(\x)-\Pi_\disc \varphi(\x)\Big]\Big[f(\bar p^{(n+1)},\bar q^{(n+1)})-\dr_t\bar p^{(n+1)}(\x)\Big] \ud \x.
\end{aligned}
\end{equation}
Introduce the quantities $\chi$ and $\bar p^{(n+1)}$ in the first term of the RHS to attain
\begin{align*}
\int_\O &\Big[\Pi_\disc p^{(n+1)}(\x)-\Pi_\disc \varphi(\x)\Big]\div(\nabla\bar p^{(n+1)}(\x)) \ud \x\\
&\leq \int_\O \Big[\chi_\disc(\x)-\chi(\x)\Big]
\Big[f(\bar p^{(n+1)},\bar q^{(n+1)})+\div(\nabla\bar p^{(n+1)}(\x))-\dr_t\bar p^{(n+1)}(\x)\Big] \ud \x\\
&+\int_\O \Big[\chi(\x)-\bar p^{(n+1)}(\x)\Big]\Big[f(\bar p^{(n+1)},\bar q^{(n+1)})+\div(\nabla\bar p^{(n+1)}(\x))-\dr_t\bar p^{(n+1)}(\x)\Big] \ud \x
\\
&+\int_\O \Big[\bar p^{(n+1)}(\x)-\Pi_\disc \varphi(\x)\Big]\Big[f(\bar p^{(n+1)},\bar q^{(n+1)})+\div(\nabla\bar p^{(n+1)}(\x))-\dr_t\bar p^{(n+1)}(\x)\Big] \ud \x\\
&-\int_\O \Big[\Pi_\disc p^{(n+1)}(\x)- \Pi_\disc \varphi(\x) \Big]\Big[f(\bar p^{(n+1)},\bar q^{(n+1)})-\dr_t\bar p^{(n+1)}(\x)\Big] \ud \x.
\end{align*}
We note that the second term of the RHS vanishes owing to \eqref{pvi-obs1}. From the above inequality, we have, for all $\varphi\in \cK_\disc$,
\begin{align*}
\int_\O &\Big[\Pi_\disc \varphi(\x)-\Pi_\disc p^{(n+1)}(\x)\Big]\div(\nabla\bar p^{(n+1)}(\x)) \ud \x\\
&\geq
\int_\O \Big[\Pi_\disc \varphi(\x)-\Pi_\disc p^{(n+1)}(\x) \Big]\Big[\dr_t\bar p^{(n+1)}(\x)-f(\bar p^{(n+1)},\bar q^{(n+1)})\Big] \ud \x\\
&+\dsp\int_\O \Big[\Pi_\disc \varphi(\x)-\bar p^{(n+1)}(\x)\Big]\Big[f(\bar p^{(n+1)},\bar q^{(n+1)})+\div(\nabla\bar p^{(n+1)})(\x)-\dr_t\bar p^{(n+1)}(\x)\Big] \ud \x\\
&+\int_\O \Big[\chi(\x)-\chi_\disc(\x)\Big]\Big[f(\bar p^{(n+1)},\bar q^{(n+1)})+\div(\nabla\bar p^{(n+1)}(\x))-\dr_t\bar p^{(n+1)}(\x)\Big] \ud \x,
\end{align*}
which gives, for all $\varphi\in \cK_\disc$, thanks again to \eqref{pvi-obs1} 
\begin{align*}
\int_\O &\Big[\Pi_\disc \varphi(\x)-\Pi_\disc p^{(n+1)}(\x)\Big]\div(\nabla\bar p^{(n+1)}(\x)) \ud \x\\
&\geq
\int_\O \Big[\Pi_\disc \varphi(\x)-\Pi_\disc p^{(n+1)}(\x) \Big]\Big[\dr_t\bar p^{(n+1)}(\x)-f(\bar p^{(n+1)},\bar q^{(n+1)})\Big] \ud \x\\
&+\dsp\int_\O\Big[\Pi_\disc \varphi(\x)-\chi_\disc(\x)\Big]\Big[f(\bar p^{(n+1)},\bar q^{(n+1)})+\div(\nabla\bar p^{(n+1)})(\x)-\dr_t\bar p^{(n+1)}(\x) \Big] \ud \x.
\end{align*}
Apply the inequality \eqref{eq-limi-conf-u} to $w:=\varphi-p^{(n+1)}\in X_{\disc,0}$, and use the above relation with taking $\varphi:=P_\disc\bar p(t^{(n+1)}) \in \cK_\disc$ to arrive at
\begin{align*}
\int_\O &\Big[\Pi_\disc P_\disc\bar p(t^{(n+1)})-\Pi_\disc p^{(n+1)}(\x)\Big]\Big[\dr_t\bar p^{(n+1)}(\x)-f(\bar p^{(n+1)},\bar q^{(n+1)})\Big] \ud \x\\
&+\int_\O \Big[\nabla_\disc P_\disc\bar p(t^{(n+1)})-\nabla_\disc p^{(n+1)}(\x) \Big]\cdot \nabla\bar p^{(n+1)}(\x) \ud \x\\
&\leq W_\disc(\A\nabla\bar p^{(n+1)})\| \nabla_\disc (P_\disc\bar p(t^{(n+1)})-p^{(n+1)}) \|_{L^2(\O)^d}+\cM_\disc^{(n+1)},
\end{align*}
where $\cM_\disc^{(n+1)}$ is defined by \eqref{eq-M}. Inserting the term $f(\Pi_\disc p^{(n+1)},\Pi_\disc q^{(n+1)})$ in the above inequality yields
\begin{align*}
\int_\O &\Big[\Pi_\disc P_\disc\bar p(t^{(n+1)})-\Pi_\disc p^{(n+1)}(\x)\Big]\Big[\dr_t\bar p^{(n+1)}(\x)-f(\Pi_\disc p^{(n+1)},\Pi_\disc q^{(n+1)})\Big] \ud \x\\
&+\int_\O \Big[\Pi_\disc P_\disc\bar p(t^{(n+1)})-\Pi_\disc p^{(n+1)}(\x)\Big]
\Big[f(\Pi_\disc p^{(n+1)},\Pi_\disc q^{(n+1)})-f(\bar p^{(n+1)},\bar q^{(n+1)})\Big] \ud \x\\
&+\int_\O \Big[\nabla_\disc P_\disc\bar p(t^{(n+1)})-\nabla_\disc p^{(n+1)}(\x) \Big]\cdot \nabla\bar p^{(n+1)}(\x) \ud \x\\
&\leq W_\disc(\A\nabla\bar p^{(n+1)})\| \nabla_\disc (P_\disc\bar p(t^{(n+1)})-p^{(n+1)}) \|_{L^2(\O)^d}+\cM_\disc^{(n+1)}.
\end{align*}
Since $p$ is the solution to the approximate scheme \eqref{gs-pvi}, it follows that
\begin{equation}\label{eq-proof3}
\begin{aligned}
\dsp\int_\O &\Big[\Pi_\disc P_\disc\bar p(t^{(n+1)})-\Pi_\disc p^{(n+1)}(\x) \Big]\Big[\partial_t\bar p^{(n+1)}(\x)-\delta_\disc^{(n+\frac{1}{2})}p(\x)\Big] \ud \x\\
&\qquad+\int_\O \Big[\nabla_\disc P_\disc\bar p(t^{(n+1)})- \nabla_\disc p^{(n+1)}(\x) \Big]\cdot \Big[\nabla\bar p^{(n+1)}(\x)-\nabla_\disc p^{(n+1)}(\x)\Big] \ud \x\\
&\leq W_\disc(\A\nabla\bar p^{(n+1)})\| \nabla_\disc (P_\disc\bar p(t^{(n+1)})-p^{(n+1)}) \|_{L^2(\O)^d}\\
&+\int_\O \Big[\Pi_\disc \varphi(\x)-\Pi_\disc p^{(n+1)}(\x)\Big]\Big[f(\bar p^{(n+1)},\bar q^{(n+1)})-f(\Pi_\disc p^{(n+1)},\Pi_\disc q^{(n+1)})\Big] \ud \x\\
&+\cM_\disc^{(n+1)}.
\end{aligned}
\end{equation}
Combining \eqref{eq-proof3}, \eqref{eq-proof-2-u}, and \eqref{eq-proof-1-u}, and using the notation $\cR^{(k)}$ given by \eqref{eq-E}, one gets
\begin{equation}\label{eq-proof-6-u}
\begin{aligned}
\int_\O &\Pi_\disc (P_\disc\bar p(t^{(n+1)})-p^{(n+1)})\delta_\disc^{(n+\frac{1}{2})}\cR(\x) \ud \x\\
&\quad+\int_\O \nabla_\disc (P_\disc\bar p(t^{(n+1)})-p^{(n+1)})\cdot \A\nabla_\disc \cR^{(n+1)}(\x) \ud \x\\
&\leq\int_\O \Pi_\disc (P_\disc\bar p(t^{(n+1)})-p^{(n+1)})\Big[f(\bar p^{(n+1)},\bar q^{(n+1)})-f(\Pi_\disc p^{(n+1)},\Pi_\disc q^{(n+1)})\Big] \ud \x\\
&\quad+\Big[\delta t_\disc+\cS(\bar p(t^{(n+1)}))
+\cS(\dr_t\bar p^{(n+1)})
+W_\disc(\A\nabla\bar p^{(n+1)})\Big]\\
&\quad \times\| \nabla_\disc (P_\disc\bar p(t^{(n+1)})-p^{(n+1)}) \|_{L^2(\O)^d}
+\cM_\disc^{(n+1)},
\end{aligned}
\end{equation}
where
\[
\begin{aligned}
\delta_\disc^{(n+\frac{1}{2})}\cR:&=\frac{ \Pi_\disc \cR^{(n+1)}-\Pi_\disc \cR^{(n)} }{ \delta t^{(n+\frac{1}{2})} }\\
&=\Big[  \dsp\frac{ \Pi_\disc  P_\disc\bar p(t^{(n+1)})-\Pi_\disc  P_\disc(\bar p(t^{(n)})) }{ \delta t^{(n+\frac{1}{2})} }-\dr_t\bar p^{(n+1)} \Big]+\Big[ \dr_t\bar p^{(n+1)}-\delta_\disc^{(n+\frac{1}{2})} p \Big],\mbox{ and}
\end{aligned}
\]
\[
\A\nabla_\disc \cR^{(n+1)}= \A\left[ \nabla_\disc  (P_\disc\bar p(t^{(n+1)}))-\nabla\bar p^{(n+1)}  \right] + \A\left[\nabla\bar p^{(n+1)}-\nabla_\disc p^{(n+1)}  \right].
\]

Secondly, the adequate regularity of the solution $\bar q$ ensures that the strong formulation \eqref{pvi-obs4} is satisfied almost everywhere in the domain $\O_T$. Taking the average over time, in which $t^{(n)} \leq t \leq t^{(n+1)}$, results in $\dr_t \bar q^{(n+1)}-g(\bar p^{(n+1)},\bar q^{(n+1)}) = \div(\B\nabla\bar q^{(n+1)})$. Consequently, injecting this relation in \eqref{eq-limi-conf-v} yields 
\begin{equation}\label{eq-proof-3-v}
\begin{aligned}
\int_\O \Pi_\disc w(\x) \Big[ \dr_t\bar q^{(n+1)}(\x) &- g(\bar p^{(n+1)},\bar q^{(n+1)}) \Big]\ud \x
+\dsp\int_\O\B\nabla\bar q^{(n+1)}(\x)\cdot \nabla_\disc w(\x)  \ud \x \\
&\leq W_\disc(\B\nabla\bar q^{(n+1)}) \| \nabla_\disc w \|_{L^2(\O)^d}\quad \forall w\in X_{\disc,0}.
\end{aligned}
\end{equation}
By inserting the non-linear discrete term $g(\Pi_\disc p^{(n+1)},\Pi_\disc q^{(n+1)})$ in the above inequality, we obtain
\begin{equation}\label{eq-proof-4-v}
\begin{aligned}
\int_\O &\Pi_\disc w(\x) \Big[ \dr_t\bar q^{(n+1)}(\x)- g(\Pi_\disc p^{(n+1)},\Pi_\disc q^{(n+1)}) \Big] \ud \x\\
&\quad+\int_\O \Pi_\disc w(\x)\Big[ g(\Pi_\disc p^{(n+1)},\Pi_\disc q^{(n+1)}) - g(\bar p^{(n+1)},\bar q^{(n+1)}) \Big] \ud \x\\
&\quad+\int_\O\B\nabla\bar q^{(n+1)}(\x)\cdot \nabla_\disc w(\x)  \ud \x \\
&\leq W_\disc(\B\nabla\bar q^{(n+1)}) \| \nabla_\disc w \|_{L^2(\O)^d},\quad \forall w\in X_{\disc,0},
\end{aligned}
\end{equation}
which leads to, because $q$ satisfies the equality  \eqref{gs-pvi-obs2} in the discrete problem
\begin{equation}\label{eq-proof-5-v}
\begin{aligned}
\int_\O &\Pi_\disc w(\x) \Big[ \dr_t\bar q^{(n+1)}(\x)- \delta_\disc^{(n+\frac{1}{2})}q(\x) \Big] \ud \x\\
&\quad+\int_\O \Big[\nabla\bar q^{(n+1)}(\x)-\nabla_\disc q^{(n+1)} \Big]\cdot \nabla_\disc w(\x)  \ud \x \\
&\leq\int_\O \Pi_\disc w(\x)\Big[g(\bar p^{(n+1)},\bar q^{(n+1)})-g(\Pi_\disc p^{(n+1)},\Pi_\disc q^{(n+1)}) \Big] \ud \x\\
&\quad+W_\disc(\B\nabla\bar q^{(n+1)}) \| \nabla_\disc w \|_{L^2(\O)^d}\quad \forall w\in X_{\disc,0}.
\end{aligned}
\end{equation}
Combining \eqref{eq-proof-5-v}, \eqref{eq-proof-2-v}, and \eqref{eq-proof-1-v}, and using the notation $\widetilde \cR^{(k)}$ defined by \eqref{eq-E}, we derive, for all $w\in X_{\disc,0}$,
\begin{equation}\label{eq-proof-6-v}
\begin{aligned}
\int_\O &\Pi_\disc w(\x)\delta_\disc^{(n+\frac{1}{2})}\widetilde \cR(\x) \ud \x
+\int_\O \nabla_\disc w(\x)\cdot \B\nabla_\disc \widetilde \cR^{(n+1)}(\x) \ud \x\\
&\quad\leq\int_\O \Pi_\disc w(\x)\Big[g(\bar p^{(n+1)},\bar q^{(n+1)})-g(\Pi_\disc p^{(n+1)},\Pi_\disc q^{(n+1)}) \Big] \ud \x\\
&\quad\Big[\delta t_\disc+\widetilde\cS(\bar q(t^{(n+1)}))
+\widetilde\cS(\dr_t\bar q^{(n+1)})
+W_\disc(\B\nabla\bar q^{(n+1)})\Big]\|\nabla_\disc w\|_{L^2(\O)^d},
\end{aligned}
\end{equation}
where
\[
\begin{aligned}
\delta_\disc^{(n+\frac{1}{2})}\widetilde \cR:&=\frac{ \Pi_\disc\widetilde \cR^{(n+1)}-\Pi_\disc \widetilde \cR^{(n)} }{ \delta t^{(n+\frac{1}{2})} }\\
&=\Big[  \dsp\frac{ \Pi_\disc  \widetilde P_\disc\bar q(t^{(n+1)})-\Pi_\disc  \widetilde P_\disc(\bar q(t^{(n)})) }{ \delta t^{(n+\frac{1}{2})} }-\dr_t\bar q^{(n+1)} \Big]+\Big[ \dr_t\bar q^{(n+1)}-\delta_\disc^{(n+\frac{1}{2})} q \Big],
\end{aligned}
\]
\[
\B\nabla_\disc \widetilde \cR^{(n+1)}= \B\left[ \nabla_\disc  (\widetilde P_\disc\bar q(t^{(n+1)}))-\nabla\bar q^{(n+1)}  \right] + \B\left[\nabla\bar q^{(n+1)}-\nabla_\disc q^{(n+1)}  \right].
\]
Multiply \eqref{eq-proof-6-u} by $\delta t^{(n+\frac{1}{2})}$ and let $w:=\delta t^{(n+\frac{1}{2})}\widetilde \cR^{(n+1)}\in X_{\disc,0}$ in \eqref{eq-proof-6-v}. For both inequalities, we can sum over $n = 0,...,m-1$, for some $m\in \{1,...,N\}$ to obtain
\begin{equation}\label{eq-proof-7-u}
\begin{aligned}
&\dsp\sum_{n=0}^{m-1} \int_\O \Pi_\disc \cR^{(n+1)}(\x)\Big[\Pi_\disc \cR^{(n+1)}(\x)-\Pi_\disc \cR^{(n)}(\x)\Big] \ud \x\\
&\quad+d_1\dsp\sum_{n=0}^{m-1} \delta t^{(n+\frac{1}{2})} \| \nabla_\disc \cR^{(n+1)} \|_{L^2(\O)^d}^2\\
&\leq\dsp\sum_{n=0}^{m-1} \delta t^{(n+\frac{1}{2})}\int_\O \Pi_\disc \cR^{(n+1)}(\x)\Big[f(\bar p^{(n+1)},\bar q^{(n+1)})-f(\Pi_\disc p^{(n+1)},\Pi_\disc q^{(n+1)}) \Big] \ud \x\\
&\quad+\dsp\sum_{n=0}^{m-1} \delta t^{(n+\frac{1}{2})}\Big[\delta t_\disc+\cS(\bar p(t^{(n+1)}))
+\cS(\dr_t\bar p^{(n+1)})
+W_\disc(\A\nabla\bar p^{(n+1)})\Big]\| \nabla_\disc \cR^{(n+1)} \|_{L^2(\O)^d}\\
&\quad+\dsp\sum_{n=0}^{m-1} \delta t^{(n+\frac{1}{2})}\cM_\disc^{(n+1)},\mbox{ and }
\end{aligned}
\end{equation}
\begin{equation}\label{eq-proof-7-v}
\begin{aligned}
&\dsp\sum_{n=0}^{m-1} \int_\O \Pi_\disc \widetilde \cR^{(n+1)}(\x)\Big[\Pi_\disc \widetilde \cR^{(n+1)}(\x)-\Pi_\disc \widetilde \cR^{(n)}(\x)\Big] \ud \x\\
&+d_1\dsp\sum_{n=0}^{m-1} \delta t^{(n+\frac{1}{2})} \| \nabla_\disc \widetilde \cR^{(n+1)} \|_{L^2(\O)^d}^2\\
&\leq\dsp\sum_{n=0}^{m-1} \delta t^{(n+\frac{1}{2})}\int_\O \Pi_\disc \widetilde \cR^{(n+1)}(\x)[g(\bar p^{(n+1)},\bar q^{(n+1)})-g(\Pi_\disc p^{(n+1)},\Pi_\disc q^{(n+1)}) \Big] \ud \x\\
&\quad+\dsp\sum_{n=0}^{m-1} \delta t^{(n+\frac{1}{2})}\Big[\delta t_\disc+\widetilde\cS(\bar q(t^{(n+1)}))
+\widetilde\cS(\dr_t\bar q^{(n+1)})
+W_\disc(\B\nabla\bar q^{(n+1)})]\| \nabla_\disc \widetilde \cR^{(n+1)} \|_{L^2(\O)^d}.
\end{aligned}
\end{equation}
Now, we apply the formula, $r(r-s)\geq \frac{1}{2}r^2-\frac{1}{2}s^2,\; \forall r,s \in \RR$, to the first term in the LHS in both inequalities, and perform the Young's inequality with small parameters $\varepsilon_7,\varepsilon_8>0$ for the second terms in the RHS of both inequalities. It follows that
\begin{equation}\label{eq-proof-8-u}
\begin{aligned}
&\frac{1}{2}\dsp\int_\O(\Pi_\disc \cR^{(m)}(\x))^2 \ud \x
+d_1\dsp\sum_{n=0}^{m-1} \delta t^{(n+\frac{1}{2})} \| \nabla_\disc \cR^{(n+1)} \|_{L^2(\O)^d}^2 \\
&\leq \frac{1}{2}\dsp\int_\O (\Pi_\disc \cR^{(0)}(\x))^2 \ud \x
+\frac{\varepsilon_7}{2}\dsp\sum_{n=0}^{m-1} \delta t^{(n+\frac{1}{2})} \| \nabla_\disc \cR^{(n+1)} \|_{L^2(\O)^d}^2\\
&\quad+\dsp\sum_{n=0}^{m-1} \delta t^{(n+\frac{1}{2})}\int_\O \Pi_\disc \cR^{(n+1)}(\x)\Big[f(\bar p^{(n+1)},\bar q^{(n+1)})-f(\Pi_\disc p^{(n+1)},\Pi_\disc q^{(n+1)}) \Big] \ud \x\\
&\quad+\frac{1}{2\varepsilon_7}\dsp\sum_{n=0}^{m-1} \delta t^{(n+\frac{1}{2})} \Big[\delta t_\disc+\cS(\bar p(t^{(n+1)}))
+\cS(\dr_t\bar p^{(n+1)})
+W_\disc(\A\nabla\bar p^{(n+1)})\Big]^2\\
&\quad+\dsp\sum_{n=0}^{m-1} \delta t^{(n+\frac{1}{2})}\cM_\disc^{(n+1)},\mbox{ and }
\end{aligned}
\end{equation}
\begin{equation}\label{eq-proof-8-v}
\begin{aligned}
&\frac{1}{2}\dsp\int_\O(\Pi_\disc \widetilde \cR^{(m)}(\x))^2 \ud \x
+d_1\dsp\sum_{n=0}^{m-1} \delta t^{(n+\frac{1}{2})} \| \nabla_\disc \widetilde \cR^{(n+1)} \|_{L^2(\O)^d}^2 \\
&\leq \frac{1}{2}\dsp\int_\O (\Pi_\disc \widetilde \cR^{(0)}(\x))^2 \ud \x
+\frac{\varepsilon_8}{2}\dsp\sum_{n=0}^{m-1} \delta t^{(n+\frac{1}{2})} \| \nabla_\disc \widetilde \cR^{(n+1)} \|_{L^2(\O)^d}^2\\
&\quad+\dsp\sum_{n=0}^{m-1} \delta t^{(n+\frac{1}{2})}\int_\O \Pi_\disc \widetilde \cR^{(n+1)}(\x)\Big[g(\bar p^{(n+1)},\bar q^{(n+1)})-g(\Pi_\disc p^{(n+1)},\Pi_\disc q^{(n+1)}) \Big] \ud \x\\
&\quad+\frac{1}{2\varepsilon_8}\dsp\sum_{n=0}^{m-1} \delta t^{(n+\frac{1}{2})} \Big[\delta t_\disc+\widetilde\cS(\bar q(t^{(n+1)}))
+\widetilde\cS(\dr_t\bar q^{(n+1)})
+W_\disc(\B\nabla\bar q^{(n+1)})\Big]^2.
\end{aligned}
\end{equation}
The initial condition terms can be handled in the following manner.
\begin{equation}\label{eq-proof-9-u}
\begin{aligned}
\| \Pi_\disc \cR^{(0)} \|_{L^2(\O)} &\leq \| \Pi_\disc P_\disc \bar p(0)-\bar p(0) \|_{L^2(\O)}
+\| \bar p(0)-\Pi_\disc J_\disc\bar p(0) \|_{L^2(\O)}\\
&\leq \cS(\bar p(0)) + \cR_\disc^0,\mbox{ and }
\end{aligned}
\end{equation}
\begin{equation}\label{eq-proof-9-v}
\begin{aligned}
\| \Pi_\disc \widetilde \cR^{(0)} \|_{L^2(\O)} &\leq \| \Pi_\disc P_\disc \bar q(0)-\bar q(0) \|_{L^2(\O)}
+\| \bar q(0)-\Pi_\disc J_\disc\bar q(0) \|_{L^2(\O)}\\
&\leq \cS(\bar q(0)) + \widetilde \cR_\disc^0.
\end{aligned}
\end{equation}
After injecting \eqref{eq-proof-9-u} and \eqref{eq-proof-F-1} proved in Lemma \ref{lemma-reaction} in \eqref{eq-proof-8-u}, we arrive at, thanks to the fact that $\sum_{n=0}^{m-1}\delta t^{(n+\frac{1}{2})}\leq T$,
\begin{equation}\label{new-E-u}
\begin{aligned}
&\frac{1}{2}\dsp\int_\O (\Pi_\disc  \cR^{(m)}(\x))^2 \ud \x
+\frac{d_1-\varepsilon_7}{2}\dsp\sum_{n=0}^{m-1} \delta t^{(n+\frac{1}{2})} \| \nabla_\disc  \cR^{(n+1)} \|_{L^2(\O)^d}^2 \\
&\leq C_1\dsp\sum_{n=0}^{m-1} \delta t^{(n+\frac{1}{2})}\|\Pi_\disc  \cR^{(n+1)}\|_{L^2(\O)}^2
+C_2\dsp\sum_{n=0}^{m-1} \delta t^{(n+\frac{1}{2})} \|\Pi_\disc \widetilde \cR^{(n+1)}\|_{L^2(\O)}^2\\
&\quad+TC_3 \Big[\delta t_\disc+\cS(\bar p(t^{(n+1)}))\Big]^2
+TC_4[\delta t_\disc+\widetilde\cS(\bar q(t^{(n+1)})) \Big]^2\\
&\quad+T\Big[\delta t_\disc+\cS(\bar p(t^{(n+1)}))
+\cS(\dr_t\bar p^{(n+1)})
+W_\disc(\A\nabla\bar p^{(n+1)})\Big]^2\\
&\quad+\Big[\cS(\bar p(0)) + \cR_\disc^0\Big]^2
+\dsp\sum_{n=0}^{m-1} \delta t^{(n+\frac{1}{2})}\cM_\disc^{(n+1)}.
\end{aligned}
\end{equation}
Also, injecting \eqref{eq-proof-9-v} and \eqref{eq-proof-G-1} proved in Lemma \ref{lemma-reaction} in \eqref{eq-proof-8-u}, we obtain
\begin{equation}\label{new-E-v}
\begin{aligned}
&\frac{1}{2}\dsp\int_\O (\Pi_\disc \widetilde \cR^{(m)}(\x))^2 \ud \x
+\frac{d_1-\varepsilon_8}{2}\dsp\sum_{n=0}^{m-1} \delta t^{(n+\frac{1}{2})} \| \nabla_\disc \widetilde \cR^{(n+1)} \|_{L^2(\O)^d}^2 \\
&\leq C_6\dsp\sum_{n=0}^{m-1} \delta t^{(n+\frac{1}{2})}\|\Pi_\disc \widetilde \cR^{(n+1)}\|_{L^2(\O)}^2
+C_5\dsp\sum_{n=0}^{m-1} \delta t^{(n+\frac{1}{2})} \|\Pi_\disc \cR^{(n+1)}\|_{L^2(\O)}^2\\
&\quad+TC_7 \Big[\delta t_\disc+\cS(\bar p(t^{(n+1)}))\Big]^2
+TC_8\Big[\delta t_\disc+\widetilde\cS(\bar q(t^{(n+1)})) \Big]^2\\
&\quad+T\Big[\delta t_\disc+\widetilde\cS(\bar q(t^{(n+1)}))
+\widetilde\cS(\dr_t\bar q^{(n+1)})
+W_\disc(\B\nabla\bar q^{(n+1)})\Big]^2\\
&\quad+\Big[\widetilde\cS(\bar q(0)) + \widetilde \cR_\disc^0\Big]^2.
\end{aligned}
\end{equation}
Applying the discrete Gronwall’s Lemma \cite[Lemma 10.5]{thomee-2007} leads to
\begin{equation}\label{eq-proof-10-u}
\begin{aligned}
&\frac{1}{2}\dsp\int_\O(\Pi_\disc  \cR^{(m)}(\x))^2 \ud \x
+\frac{d_1-\varepsilon_7}{2}\dsp\sum_{n=0}^{m-1} \delta t^{(n+\frac{1}{2})} \| \nabla_\disc  \cR^{(n+1)} \|_{L^2(\O)^d}^2 \\
&\leq \exp(TC_1)\Big[C_2\dsp\sum_{n=0}^{m-1} \delta t^{(n+\frac{1}{2})} \|\Pi_\disc \widetilde \cR^{(n+1)}\|_{L^2(\O)}^2\\
&\quad+TC_3 \Big(\delta t_\disc+\cS(\bar p(t^{(n+1)}))\Big)^2
+TC_4\Big(\delta t_\disc+\widetilde\cS(\bar q(t^{(n+1)})) \Big)^2\\
&\quad+T\Big(\delta t_\disc+\cS(\bar p(t^{(n+1)}))
+\cS(\dr_t\bar p^{(n+1)})
+W_\disc(\A\nabla\bar p^{(n+1)})\Big)^2\\
&\quad+\Big(\cS(\bar p(0)) + \cR_\disc^0\Big)^2
+\dsp\sum_{n=0}^{m-1} \delta t^{(n+\frac{1}{2})}\cM_\disc^{(n+1)}
\Big]
,\mbox{ and }
\end{aligned}
\end{equation}
\begin{equation}\label{eq-proof-10-v}
\begin{aligned}
&\frac{1}{2}\dsp\int_\O(\Pi_\disc \widetilde \cR^{(m)}(\x))^2 \ud \x
+\frac{d_1-\varepsilon_8}{2}\dsp\sum_{n=0}^{m-1} \delta t^{(n+\frac{1}{2})} \| \nabla_\disc \widetilde \cR^{(n+1)} \|_{L^2(\O)^d}^2 \\
&\leq \exp(TC_6)\Big[C_5\dsp\sum_{n=0}^{m-1} \delta t^{(n+\frac{1}{2})} \|\Pi_\disc \cR^{(n+1)}\|_{L^2(\O)}^2\\
&\quad+TC_7 \Big(\delta t_\disc+\cS(\bar p(t^{(n+1)}))\Big)^2
+TC_8\Big(\delta t_\disc+\widetilde\cS(\bar q(t^{(n+1)})) \Big)^2\\
&\quad+T\Big(\delta t_\disc+\widetilde\cS(\bar q(t^{(n+1)}))
+\widetilde\cS(\dr_t\bar q^{(n+1)})
+W_\disc(\B\nabla\bar q^{(n+1)})\Big)^2\\
&\quad+\Big(\widetilde\cS(\bar q(0)) + \widetilde \cR_\disc^0\Big)^2
\Big].
\end{aligned}
\end{equation}
The simplification of the quantities on the RHSs with those on the LHSs allows us to sum both inequalities and to derive the following relations, thanks to the discrete Poincaré inequality (coming from \eqref{corc-eq})
\begin{equation}\label{eq-proof-12-1}
\begin{aligned}
&\Big[\frac{d_1-\varepsilon_1}{2}-C_\disc C_5\exp(TC_2)\Big]\dsp\sum_{n=0}^{m-1} \delta t^{(n+\frac{1}{2})} \| \nabla_\disc  \cR^{(n+1)} \|_{L^2(\O)^d}^2\\
&\quad+\Big[\frac{d_1-\varepsilon_2}{2}-C_\disc C_2\exp(TC_1)\Big]\dsp\sum_{n=0}^{m-1} \delta t^{(n+\frac{1}{2})} \| \nabla_\disc \widetilde \cR^{(n+1)} \|_{L^2(\O)^d}^2\\
&\leq
\mathbb I_3,
\mbox{ and }
\end{aligned}
\end{equation}
\begin{equation}\label{eq-proof-12-2}
\begin{aligned}
\frac{1}{2}\|\Pi_\disc  \cR^{(m)}\|_{L^2(\O)}^2
+\frac{1}{2}\|\Pi_\disc  \widetilde \cR^{(m)}\|_{L^2(\O)}^2
\leq \mathbb I_3,
\end{aligned}
\end{equation}
where
\[
\begin{aligned}
\mathbb I_3&=
C_9 \Big[ \Big(\delta t_\disc+\cS(\bar p(t^{(n+1)}))\Big)^2
+\Big(\delta t_\disc+\widetilde\cS(\bar q(t^{(n+1)})) \Big)^2\\
&\quad+\Big(\delta t_\disc+\widetilde\cS(\bar q(t^{(n+1)}))
+\widetilde\cS(\dr_t\bar q^{(n+1)})
+W_\disc(\B\nabla\bar q^{(n+1)})\Big)^2\\
&\quad+\Big(\delta t_\disc+\cS(\bar p(t^{(n+1)}))
+\cS(\dr_t\bar p^{(n+1)})
+W_\disc(\A\nabla\bar p^{(n+1)})\Big)^2\\
&\quad+(\widetilde\cS(\bar q(0)) + \widetilde \cR_\disc^0)^2
+(\cS(\bar p(0)) + \cR_\disc^0)^2
+\dsp\sum_{n=0}^{m-1} \delta t^{(n+\frac{1}{2})}\cM_\disc^{(n+1)}
\Big].
\end{aligned}
\]
Combining \eqref{new-eq-proof-SD-u}, \eqref{new-eq-proof-100}, and \eqref{eq-proof-12-2}, using the relation, $(r+s)^{1/2}\leq r^{1/2}+s^{1/2}$, for all $r,s\in \RR^{+}$, and utilising the triangle inequality, we obtain, for all $m\in\{1,...,N-1\}$,
\begin{equation}\label{eq-proof-11-u}
\begin{aligned}
&\|\Pi_\disc p^{(m)}-\bar p(t^{(m)})\|_{L^2(\O)}
+\|\Pi_\disc q^{(m)}-\bar q(t^{(m)})\|_{L^2(\O)}\\
&\leq \|\Pi_\disc \cR^{(m)}\|_{L^2(\O)}
+\|\Pi_\disc P_\disc\bar p(t^{(m)})-\bar p(t^{(m)})\|_{L^2(\O)}\\
&\quad+ \|\Pi_\disc \widetilde \cR^{(m)}\|_{L^2(\O)}
+\|\Pi_\disc P_\disc\bar q(t^{(m)})-\bar q(t^{(m)})\|_{L^2(\O)}\\
&\leq 
C \sqrt{\mathbb I_3}+\cS(\bar p(t^{(m)}))+\sqrt{2}\cS(\bar q(t^{(m)}))\\
&\leq C\Big[ \delta t_\disc+\cS(\bar p(t^{(m)}))
+\widetilde\cS(\bar q(t^{(m)}))
+\cS(\dr_t\bar p^{(m)})
+\widetilde\cS(\dr_t\bar q^{(m)})\\
&\quad+W_\disc(\A\nabla\bar p^{(m)})
+W_\disc(\B\nabla\bar q^{(m)})
+\widetilde\cS(\bar q(0)) + \widetilde \cR_\disc^0
+\cS(\bar p(0)) + \cR_\disc^0\\
&\quad+\Big(\dsp\sum_{n=0}^{N-1} \delta t^{(n+\frac{1}{2})}\cM_\disc^{(n+1)}\Big)^{1/2}
\Big].
\end{aligned}
\end{equation}
Together with the triangle inequality and the Lipschitz continuity of the solutions $\bar p,\bar q:[0,T]\to H^1(\O)$, we conclude the first desired estimate \eqref{eq-est-1-theorem}.

Also, Combining \eqref{new-eq-proof-SD-u}, \eqref{new-eq-proof-100}, and \eqref{eq-proof-12-1} with $m=N-1$, using the relation, $(r+s)^{1/2}\leq r^{1/2}+s^{1/2}$, for all $r,s\in \RR^{+}$, and using the triangle inequality, we have
\begin{equation}\label{eq-proof-13-u}
\begin{aligned}
&\dsp\sum_{n=0}^{N-1}\delta t^{(n+\frac{1}{2})}\|\nabla_\disc p^{(n+1)} - \nabla\bar p(t^{(n+1)})\|_{L^2(\O)^d}
+\dsp\sum_{n=0}^{N-1}\delta t^{(n+\frac{1}{2})}\|\nabla_\disc q^{(n+1)} - \nabla\bar q(t^{(n+1)})\|_{L^2(\O)^d}\\
&\leq \dsp\sum_{n=0}^{N-1}\delta t^{(n+\frac{1}{2})}\|\nabla_\disc \cR^{(n+1)}\|_{L^2(\O)^d}
+\dsp\sum_{n=0}^{N-1}\delta t^{(n+\frac{1}{2})}\|\nabla_\disc P_\disc\bar p(t^{(n+1)})-\nabla\bar p(t^{(n+1)})\|_{L^2(\O)^d}\\
&\quad+\dsp\sum_{n=0}^{N-1}\delta t^{(n+\frac{1}{2})}\|\nabla_\disc \widetilde \cR^{(n+1)}\|_{L^2(\O)^d}
+\dsp\sum_{n=0}^{N-1}\delta t^{(n+\frac{1}{2})}\|\nabla_\disc P_\disc\bar q(t^{(n+1)})-\nabla\bar q(t^{(n+1)})\|_{L^2(\O)^d}\\
&\leq \dsp\sum_{n=0}^{N-1}\delta t^{(n+\frac{1}{2})}\Big[C \sqrt{\mathbb I_3}+\cS(\bar p(t^{(m)}))+\sqrt{2}\cS(\bar q(t^{(m)}))\Big]\\
&\leq C\dsp\sum_{n=0}^{N-1}\delta t^{(n+\frac{1}{2})}
\Big[ \delta t_\disc+\cS(\bar p(t^{(n+1)}))
+\widetilde\cS(\bar q(t^{(n+1)}))
+\cS(\dr_t\bar p^{(n+1)})
+\widetilde\cS(\dr_t\bar q^{(n+1)})\\
&\quad+W_\disc(\A\nabla\bar p^{(n+1)})
+W_\disc(\B\nabla\bar q^{(n+1)})
+\widetilde\cS(\bar q(0)) + \widetilde \cR_\disc^0
+\cS(\bar p(0)) + \cR_\disc^0\\
&\quad+\Big(\cM_\disc^{(n+1)}\Big)^{1/2}
\Big].
\end{aligned}
\end{equation}
Together with the triangle inequality and the Lipschitz continuity of $\nabla\bar p,\nabla\bar q:[0,T]\to H^1(\O)$, we conclude the second desired estimate \eqref{eq-est-2-theorem}, which completes the proof.
\end{proof}

\begin{remark}\label{remark-1}
Theorem \ref{thm-err-rm} establishes the first general error estimates for the approximation of the model \eqref{pvi-obs}, applicable to both conforming and non-conforming methods. In contrast, the estimate presented in \cite{REF-1} is restricted to the $\mathbb P1$ finite elements method.

The regularity assumptions made on the solution in Theorem \ref{thm-err-rm} are required to obtain convergence rates. However, we establish in \cite{Alnashri-2022} the convergence of the gradient schemes \eqref{gs-pvi} under natural conditions stated in Assumptions \ref{assump-1}. The existence of a continuous solution to the problem \eqref{pvi-obs-weak} will follow from the convergence of the gradient schemes and Lemma \ref{lemma-1}.
\end{remark}

\begin{remark}[Convergence rates]
Let $h_\disc$ be the space size defined as in \cite[Definition 2.22]{B-10} with a slight modification, thanks to the continuously embedded spaces $W^{2,\infty}(\O)$ and $W^{1,\infty}(\O)^d$,
\[
\begin{aligned}
h_\disc:=\dsp\max\Big\{ 
\dsp\sup_{v\in W^{2,\infty}(\O)\setminus\{0\}}\frac{S(v)}{\|v\|_{W^{2,\infty}(\O)}},
\dsp\sup_{{\bf v}\in W^{1,\infty}(\O)^d\setminus\{0\}}\frac{W_\disc({\bf v})}{\|{\bf v}\|_{W^{1,\infty}(\O)^d}}
\Big\},
\end{aligned}
\]
and thus fulfils
\begin{subequations}\label{hd}
\begin{equation}\label{hd-1}
\forall v \in W^{2,\infty}(\O) \cap H_0^1(\O),\quad S(v)\leq h_\disc \|v\|_{W^{2,\infty}(\O)},
\end{equation}
\begin{equation}\label{hd-3}
\forall v \in W^{2,\infty}(\O) \cap H_0^1(\O),\quad \widetilde S(v)\leq h_\disc \|v\|_{W^{2,\infty}(\O)},
\end{equation}
\begin{equation}\label{hd-3}
\forall {\bf v} \in W^{1,\infty}(\O)^d \cap H_0^1(\O)^d,\quad W_\disc({\bf v})\leq h_\disc\|{\bf v}\|_{W^{1,\infty}(\O)^d}.
\end{equation}
\end{subequations}
Theorem \ref{thm-err-rm} provides convergence rates relative to both the time discretisation and the mesh size ($h_\mathcal M$)  is related to the space size $h_\disc$. From Relations \eqref{eq-def-S}, it follows that the convergence rates shown in Theorem \ref{thm-err-rm} depend on the parameters $S$, $W_\disc$, and $\cM_\disc$. Based on these relations, both terms $S$ and $W_\disc$ are of order $\mathcal O(h)$ for all common low-order gradient discretisations, employing optimal interpolants.

Let us now discuss the estimation of the term $\cM_\disc$. For a first-order conforming numerical method, we can ensure that $\|\chi-\chi_\disc\|_{L^2(\O)}=\mathcal O(h^2)$ if $\chi\in H^2(\O)$, and we can construct an interpolant $P_\disc$ satisfyning $\|\Pi_\disc P_\disc\bar p(t^{(n+1)})-\bar p(t^{(n+1)})\|_{L^2(\O)}=\mathcal O(h^2)$. It is thus possible to conclude that $\cM_\disc=\mathcal O(h^2)$ since can write, thanks to \eqref{pvi-obs1}
\[
\begin{aligned}
\cM_\disc^{(n+1)}&=
\dsp\int_\O\Big[ \chi_\disc(\x)-\chi(\x) \Big] \Big[ \bar p(t^{(n+1)})-\Pi_\disc P_\disc \bar p(t^{(n+1)}) \Big]\\
&\quad\quad\quad\times\Big[f(\bar p^{(n+1)},\bar q^{(n+1)})+\div(\A(\x)\nabla\bar p^{(n+1)}(\x))-\dr_t\bar p^{(n+1)}(\x)\Big] \ud \x.
\end{aligned}
\]

For non-conforming reconstructions, such as the HMM method used in our numerical simulation, it is possible to follow the proof of \cite[Theorem 2.13]{YJ-2016} (with letting $F:=f(\bar p^{(n+1)},\bar q^{(n+1)})+\div(\A(\x)\nabla\bar p^{(n+1)}(\x))-\dr_t\bar p^{(n+1)}(\x)$) to obtain the estimate $\cM_\disc=\mathcal O(h^2)$ under suitable assumptions that $\bar p -\chi \in W^{1,\infty}(0,T;W^{2,\infty}(\O))$.

\end{remark}

\section{Numerical tests}\label{sec-numerical}
This section presents numerical computations illustrating the performance of a specific scheme within the gradient discretisation framework, namely the mixed finite volume method. For a detailed formulation of the scheme applied to the model \eqref{pvi-obs}, we refer the reader to \cite[Section 5]{Alnashri-2022}.

\begin{test}
Let $\bf{Id}$ denote to the $2\times 2$ identity matrix, $\mathcal B({\bf 0},r)$ to an open disk, which is of center ${\bf 0}=(0,0)$ and radius $r$, and ${\bf 1}_A$ to a characteristic function of a set $A$. We 
consider \cite[Example 5.3]{REF-1}, where $\O=(-1,1)^2$, $\A=(0.01)\bf{Id}$, $\B=(0.5)\bf{Id}$, $\chi=0.3$, $p_0={\bf 1}_{\mathcal B({\bf 0},0.3)}$, $q_0={\bf 1}_{\mathcal B({\bf 0},0.75)}$, and the reaction functions are given by
\[
f(\bar p,\bar q)=\dsp\frac{5\bar q \bar p}{\bar q+0.7} \mbox{ and } g(\bar p,\bar q)=\dsp\frac{-0.5\bar q \bar p}{\bar q+0.7}.
\]

Figure \ref{fig-1} shows the graphs of the evolution of the approximate solutions $p$ and $q$ computed on meshes comprising 4443 hexagonal cells at different times. The evolution appears comparable to the one obtained with the linear finite elements method.
\begin{figure}[ht]
	\begin{center}
	\begin{tabular}{cc}
	\includegraphics[width=0.40\linewidth]{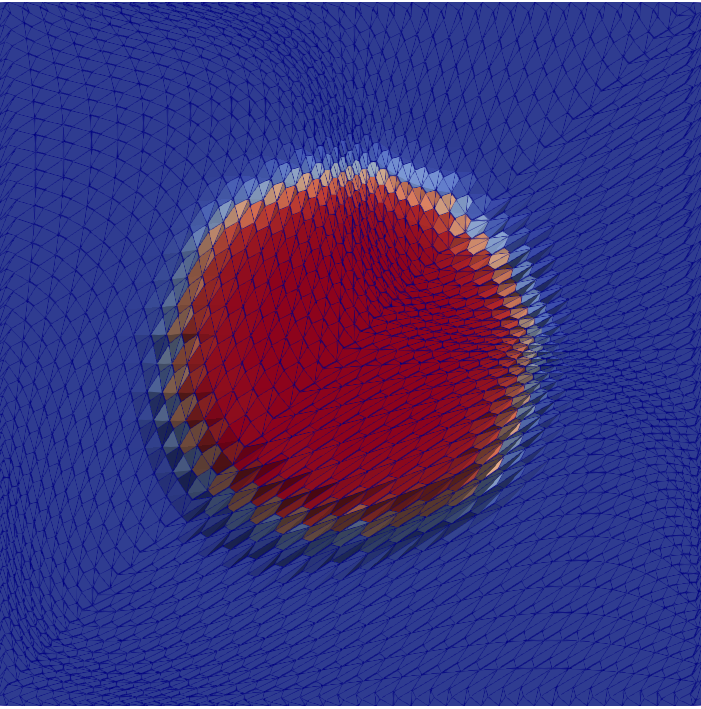} & \includegraphics[width=0.40\linewidth]{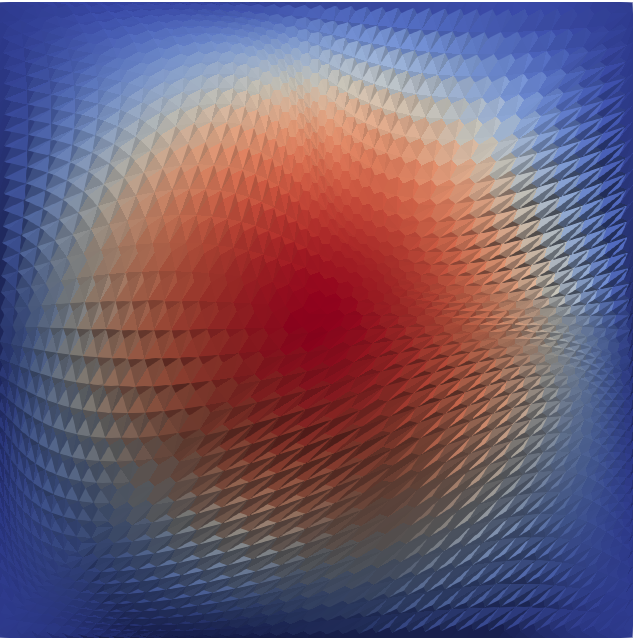}\\
	$T=0.1$ & $T=0.1$\\
	\includegraphics[width=0.40\linewidth]{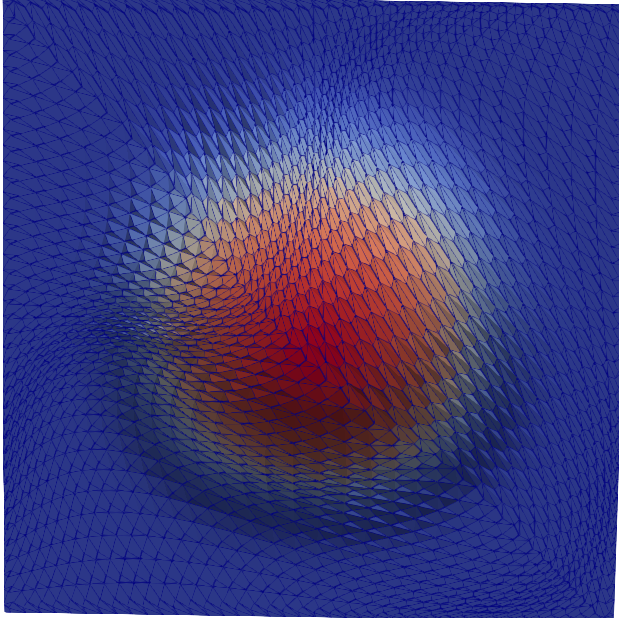} & \includegraphics[width=0.40\linewidth]{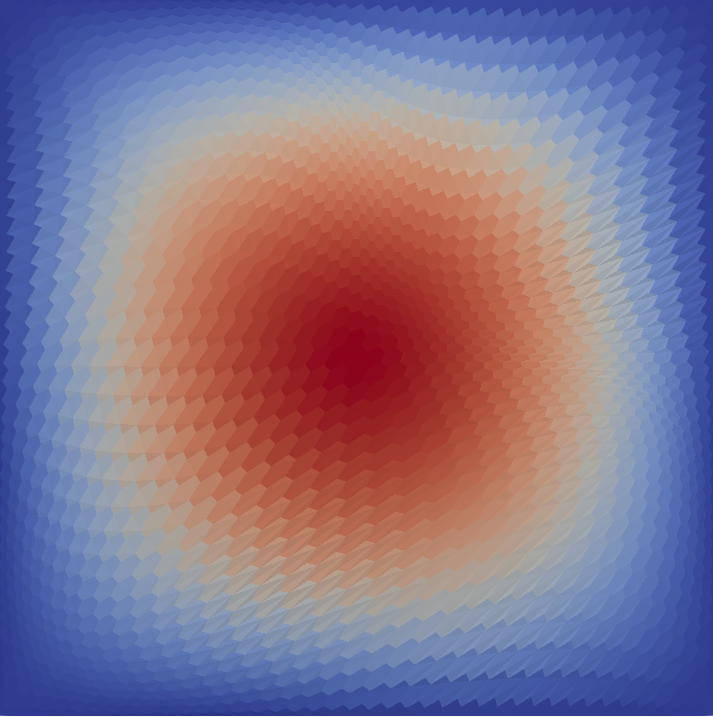}\\
	$T=1$ & $T=1$\\
	\includegraphics[width=0.40\linewidth]{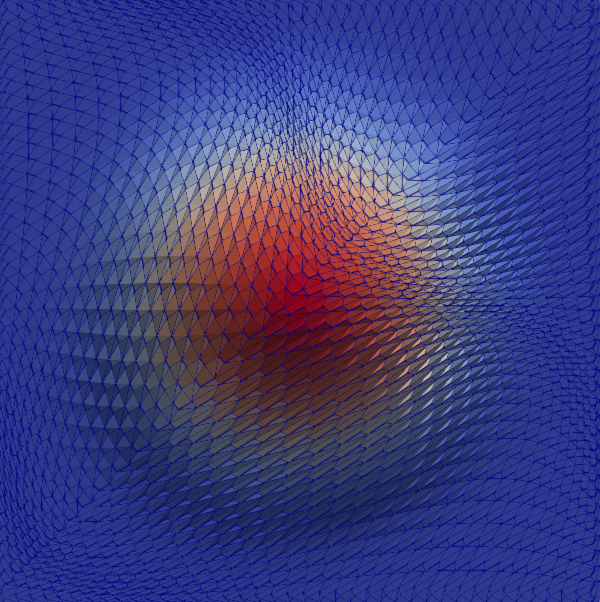} & \includegraphics[width=0.40\linewidth]{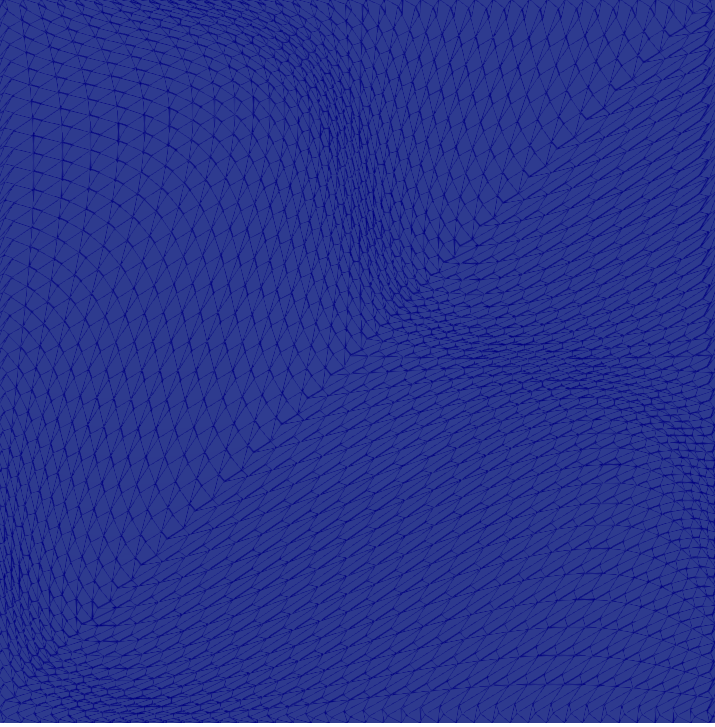}\\
	$T=2$ & $T=2$\\
	\end{tabular}
	\end{center}
	\caption{Evolution of $p$ (left column) and $q$ (right column).}
	\label{fig-1}
\end{figure}

\end{test}

\begin{test} To precisely evaluate convergence, we design a test case with an analytical solution, enabling exact measurement of the convergence orders. This contrasts with most existing studies, such as \cite{REF-1,REF-2}, that rely on cases without known analytical solutions and instead estimate convergence rates by comparing coarse-mesh approximations against a fine-mesh reference solution.

We begin with defining the time and space function $\alpha:\O\times[0,T]$, and the time dependent functions $\;\beta,\; \gamma,\; \zeta:[0,T] \to \RR^+$ by
\[
\alpha(\x,t)=\Big[ \Big(x-\frac{1}{3}\cos(4\pi t) \Big)^2 + \Big(y-\frac{1}{3}\sin(4\pi t) \Big)^2\Big]^{\frac{1}{2}}, \quad \beta(t)=\frac{1}{3}+0.3 \sin(16\pi t),
\]
\[
\gamma(t)=\frac{1}{3}\cos(4\pi t), \quad \mbox{ and }\quad \zeta(t)=\frac{1}{3}\sin(4\pi t).
\]

Letting $\O=(-1,1)^2$, $\O^+:=\{ \x\in \O\; : \; \alpha(\x,t) > \beta(\x,t) \} \mbox{ and } \O^{-}=\O-\O^+$ and the final time $T=0.25$, we consider the model \eqref{pvi-obs} with $\A={\bf Id}$, $\B=(0.25){\bf Id}$, and the barrier $\chi=0$. The reaction functions are given by
\[
f(\bar p,\bar q)=(\bar p+\bar q)^2 \quad \mbox{ and }\quad  g(\bar p, \bar q)=\bar p(1-\bar p\bar q).
\]
The exact solution is given by
\[
\bar p(\x,t)=\begin{cases}
\hfill \frac{1}{2} \Big( \alpha^2(\x,t) - \beta^2(t) \Big)^2&,\mbox{ if }\x \in \O^+,\\
0&,\mbox{ if }\x \in \O^-,
\end{cases}
\]
and
\[
\bar q(\x,t)=\exp(x+y+0.5 t).
\]

In this test case, the boundary conditions are non-homogeneous and derived directly from the analytical solution. Equation \eqref{pvi-obs1} includes a source term derived from the known exact solution, while Equation \eqref{pvi-obs4} is subject to the following source function.
\[
S(\x,t)=\begin{cases}
\hfill 4 \Big[ \beta^2(t)-2\alpha^2(\x,t)-\frac{1}{2}\left( \alpha^2(\x,t)-\beta^2(t) \right) \left(\gamma(t)+\beta(t)\frac{\ud \beta}{\ud t}(t) \right) \Big]&,\mbox{ if }\x \in \O^+,\\
4 \beta^2(t)\left[\alpha^2(\x,t)- \beta^2(t)-1 \right]&,\mbox{ if }\x \in \O^-.
\end{cases}
\]
Figure \ref{fig-2} presents the convergence graphs at the final time $T=2$. As predicted by Theorem \ref{thm-err-rm}, the results confirm a first-order convergence rate with respect to the mesh size $h$.

\begin{figure}[ht]
	\begin{center}
	\includegraphics[scale=0.4]{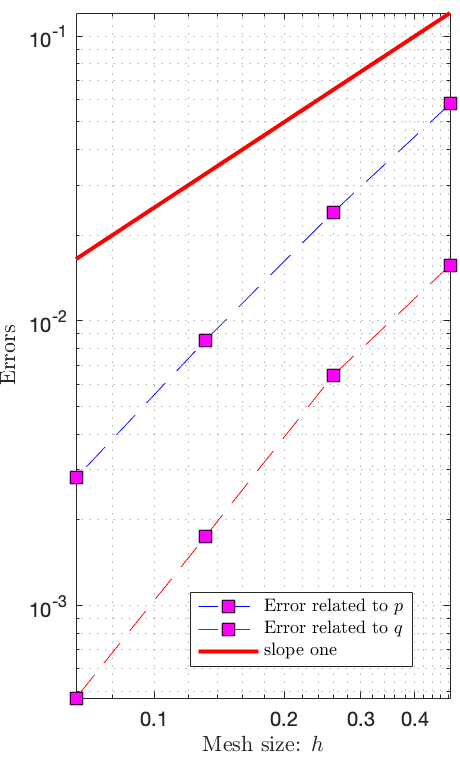}
	\end{center}
\caption{The errors on hexagonal meshes.}	
\label{fig-2}
\end{figure}

\end{test}


\bibliographystyle{siam}
\bibliography{Ref}

\end{document}



\[
h_\disc:=\dsp\max[ 
\dsp\sup_{\varphi\in W^{2,\infty}(\O)\setminus\{0\}}\frac{S_\disc(\varphi)}{\|\varphi\|_{W^{2,\infty}(\O)}},  
\dsp\sup_{\varphi\in W^{1,\infty}(\O)^d\setminus\{0\}}\frac{W_\disc(\bphi)}{\|\bphi\|_{W^{1,\infty}(\O)^d}}
],
\]
and it therefore satisfies
\begin{subequations}\label{hd}
\begin{equation}\label{hd-1}
\forall \varphi \in W^{2,\infty}(\O) \cap H_0^1(\O),\quad S_\disc(\varphi)\leq h_\disc \|\varphi\|_{W^{2,\infty}(\O)},
\end{equation}
\begin{equation}\label{hd-2}
\forall \bphi \in W^{1,\infty}(\O)^d \cap H_0^1(\O)^d,\quad W_\disc(\bphi)\leq h_\disc\|\bphi\|_{W^{1,\infty}(\O)^d}.
\end{equation}
\end{subequations}

$\cA:\RR \times \RR^d \times R \to \RR$ is defined by 
\[
\cA(u,\bvarphi,w)=\dsp\int_\O\dsp\sum_{i=1}^d u \varphi_i w \ud \x, \quad \forall u,w\in \RR \mbox{ and } \bvarphi=(\varphi_1,...,\varphi_d)\in \RR^d.
\]

